\newcommand{\overbar}[1]{\mkern 1.5mu\overline{\mkern-1.5mu#1\mkern-1.5mu}\mkern 1.5mu}
\theoremstyle{plain}
\newtheorem{theorem}{Theorem}[section]
\newtheorem{proposition}[theorem]{Proposition}
\newtheorem{corollary}[theorem]{Corollary}
\theoremstyle{definition}
\newtheorem{definition}[theorem]{Definition}
\newcommand{\Spvek}[2][r]{%
	\gdef\@VORNE{1}
	\left(\hskip-\arraycolsep%
	\begin{array}{#1}\vekSp@lten{#2}\end{array}%
	\hskip-\arraycolsep\right)}
\def\vekSp@lten#1{\xvekSp@lten#1;vekL@stLine;}
\def\vekL@stLine{vekL@stLine}
\def\xvekSp@lten#1;{\def\temp{#1}%
	\ifx\temp\vekL@stLine
	\else
	\ifnum\@VORNE=1\gdef\@VORNE{0}
	\else\@arraycr\fi%
	#1%
	\expandafter\xvekSp@lten
	\fi}
\begin{document}
	\title[Orthogonal polynomials on Cantor sets of zero Lebesgue measure]{Orthogonal polynomials on Cantor sets of zero Lebesgue measure}
	\author{G\"{o}kalp Alpan}
\address{Department of Mathematics, Bilkent University, 06800 Ankara, Turkey}
\email{gokalp@fen.bilkent.edu.tr}
\thanks{The author is supported by a grant from T\"{u}bitak: 115F199.}
\subjclass[2010]{31A15 \and 42C05}
\keywords{ Szeg\H{o} class \and Isospectral torus \and orthogonal polynomials \and Jacobi matrices \and Cantor set \and Widom factors}
\begin{abstract}
In this survey article, we review some results and conjectures related to orthogonal polynomials on Cantor sets. The main purpose of this paper is to emphasize the role of equilibrium measures in order to have a general theory of sufficiently good measures (measures that behave similarly to measures which are in the Szeg\H{o} class and the isospectral torus in the finite gap case) supported on totally disconnected subsets of $\mathbb{R}$. We present some open problems a number of which can be studied numerically.
	
\end{abstract}
\maketitle
\section{Introduction}
Throughout the article, by a measure $\mu$ we mean a unit Borel measure with an infinite compact support on $\mathbb{R}$. For such a measure $\mu$, we can find (see e.g. \cite{ase}) a sequence of polynomials $\{P_n(\cdot;\mu)\}_{n=1}^\infty$ such that $P_n(x;\mu)= \gamma_n x^n+\dots$ with $\gamma_n>0$ and  
\begin{equation*}
\int P_n(x;\mu) P_m(x;\mu) d\mu(x)=\delta_{mn}.
\end{equation*}
We call $P_n(\cdot;\mu)$ the $n$-th orthonormal polynomial for $\mu$. If we assume that $P_{-1}(\cdot;\mu):=0$ and $P_0(\cdot;\mu):=1$ then these polynomials satisfy a three term recurrence relation, that is there are two bounded sequences $(a_n)_{n=1}^\infty$ and $(b_n)_{n=1}^\infty$ such that for $n\geq 0$ we have
\begin{equation}\label{recur}
xP_n(x;\mu) = a_{n+1}P_{n+1}(x;\mu) + b_{n+1}P_n(x;\mu) + a_n P_{n-1}(x;\mu).
\end{equation}
Note that, $a_n>0$ and $b_n\in\mathbb{R}$  hold true for $n\in\mathbb{N}$ because of the imposed initial conditions. We define the $n$-th monic orthogonal polynomial by $p_n(x;\mu):= P_n(x;\mu)/\gamma_n$. 

Conversely, if we are given two bounded sequences $(a_n)_{n=1}^\infty$ and $(b_n)_{n=1}^\infty$ with $a_n>0$ and $b_n\in\mathbb{R}$ for $n\in\mathbb{N}$ then we can define a self-adjoint bounded operator $J^+: l^2(\mathbb{N})\rightarrow l^2(\mathbb{N})$ (we call these operators (one sided) Jacobi operators or Jacobi matrices) as follows where the matrix is represented in the standard basis:

\begin{equation}
J^+=\left( \begin{array}{ccccc}
b_1 & a_1 &0 & 0 &\ldots \\
a_1 & b_2 & a_2 & 0& \ldots \\
0& a_2 & b_3 & a_3 & \ldots \\
\vdots & \vdots & \vdots & \vdots&\ddots \\
\end{array} \right).
\end{equation}

By Favard's theorem, the scalar valued spectral measure of $J^+$ for the vector $(1,0,0,\ldots)^T$ is the measure which produces $(a_n)_{n=1}^\infty$ and $(b_n)_{n=1}^\infty$ in \eqref{recur}. Moreover this implies that the spectrum $\sigma(J^+)$ and the support of $\mu$ coincide. Because of this one-to-one correspondence between Jacobi matrices and measures we often use $J^+(\mu)$ and $J^+\left((a_n,b_n)_{n=1}^\infty\right)$ in the article.

Let $\delta_n$ be the normalized counting measure on the zeros of the $n$-th monic orthogonal polynomial for $\mu$. If there is a measure $\nu$ such that $\delta_n\rightarrow \nu$ in weak star sense then $\nu$ is called the density of states (DOS) measure for $\mu$ or for $J^+(\mu)$.

Similarly, one can also define two sided Jacobi matrices. For two bounded sequences $(a_n)_{n=-\infty}^\infty$ and $(b_n)_{n=-\infty}^\infty$ with $a_n\geq 0$ and $b_n\in\mathbb{R}$ for $n\in\mathbb{Z}$, the corresponding Jacobi matrix is defined as follows:

\begin{equation*}
J=\begin{pmatrix}
	\ddots & \ddots & \ddots \\ 
	& a_{-1} &b_0 & a_0    \\
	& & a_0 & b_1& a_1  \\
	& & & a_1 & b_2 & a_2 \\
	& & & & \ddots & \ddots & \ddots
\end{pmatrix}.
\end{equation*}

For the restriction $J^+\left((a_k,b_k)\right)_{k=n+1}^\infty$  of $J$ we use  $J^{+}_{\restriction_n}$. In order to denote $J^+\left((a_{n-k},b_{n-k+1})\right)_{k=1}^\infty$ we use $J^{-} _{\restriction_n}$. If $n=0$ we skip the subscript. In order these restrictions to be well defined, $a_k\neq 0$ or $a_{n-k}\neq 0$ should be valid respectively for $k\in\mathbb{N}$. By the DOS measure for $J$ we mean the DOS measure for $J^+$.

Theory of asymptotics of orthogonal polynomials are closely related to potential theory. For a general treatment of logarithmic potential theory, see e.g. \cite{Ransford}, \cite{saff}. Let us denote the logarithmic capacity by $\mathrm{Cap}(\cdot)$. A measure $\mu$ satisfying 
\begin{equation}\label{reg}
\lim_{n\rightarrow\infty} \|p_{n}(\cdot;\mu)\|_{L^2(\mu)}^{1/n}= \mathrm{Cap(supp(\mu))}
\end{equation}
is called regular in the sense of Stahl-Totik where $\|\cdot\|_{L^2(\mu)}$ denotes the Hilbert norm in $L^2(\mu)$ and $\mathrm{supp}(\mu)$ stands for the support of $\mu$. In \eqref{reg}, $\|p_{n}(\cdot;\mu)\|_{L^2(\mu)}$ can be replaced by $a_1\cdots a_n$ since these two quantities are equal. Moreover, for a regular measure $\mu$ with a non-polar compact support $K$, $\delta_n$ converges to the equilibrium measure of $K$ as $n\rightarrow\infty$, see e.g. Theorem 1.7 in \cite{simon1}. If a measure $\mu$ is regular then we use the notation $\mu\in{\mathrm{\bf{Reg}}}$. We say that $J^+(\mu)$ is regular if $\mu\in{\mathrm{\bf{Reg}}}$. Equivalent characterizations of \eqref{reg} and more on regular measures can be found in \cite{Stahl} and \cite{simon1}. 

There are some classes of measures for which the asymptotics stronger than \eqref{reg} hold. Let $K=\cup_{i=1}^n [\alpha_i, \beta_i]$ (we call these sets finite gap sets) where each $[\alpha_i,\beta_i]$ is a non-degenerate compact interval on $\mathbb{R}$. The Szeg\H{o} class of measures and the measures associated with the isospectral torus on $K$ are typical examples of such class of measures, see \cite{apt}, \cite{chriss}, \cite{Chris}, \cite{widom2}. If we replace a finite gap set by a Parreau-Widom set the concepts of Szeg\H{o} class and isospectral torus on this set still make sense but the theory is more complicated and less complete in this generality. We refer the reader to \cite{christiansen}, \cite{christi}, \cite{peher}, \cite{sodin}, \cite{volberg} for some results about orthogonal polynomials and Jacobi matrices on Parreau-Widom sets. 

If $L\subset \mathbb{R}$ is a zero Lebesgue measure Cantor set with positive capacity then we do not have a general theory of measures which are analogues of Szeg\H{o} class and isospectral torus on $L$. There are still interesting results -both analytic and numerical- and conjectures regarding what these two classes of objects may mean in this case. 

Orthogonal polynomials on zero Lebesgue measure sets are our main focus here. We review related results, conjectures and suggested definitions. We prove a couple of theorems on Parreau-Widom sets which also have some implications on zero measure case. We define Szeg\H{o} class and isospectral torus on more general sets and in our approach the concept of equilibrium measure is central.

The plan of the paper is as follows. In Section 2 and Section 3 we discuss the results which are valid on finite gap sets and Parreau-Widom sets concerning Szeg\H{o} class and isospectral torus. We also prove some new results in these sections, see Theorem \ref{dct}, Theorem \ref{IT} and Theorem \ref{szeg}. In Section 4, we propose definitions for Szeg\H{o} class and isospectral torus which are compatible with the definitions in finite gap sets and previously suggested definitions for zero measure case. In Section 5, we review some conjectures and results concerning orthogonal polynomials where the support of the prescribed measures are of zero Lebesgue measure.  In Section 6, we state several open problems related to these concepts. 

For a non-polar compact subset $K$ of $\mathbb{R}$ the Green function with a pole at infinity for $\overline{\mathbb{C}}\setminus K$ is denoted by $g_K$ and the equilibrium measure is denoted by $\mu_K$. We use the notation $|\cdot|$ for the Lebesgue measure of a set $L$ on $\mathbb{R}$. In order to denote the essential spectrum we use $\sigma_{\mathrm{ess}}$. Here, the essential spectrum is used for the set of all accumulation points of the spectrum.
\section{Isospectral Torus}

There are various ways to define the isospectral torus on a finite gap set $K$, see Section 3 in \cite{Chris2}. In order to discuss what this concept may mean on more general sets, we first give background information on related concepts.

Let $K$ be a non-polar compact subset of $\mathbb{R}$ which is regular with respect to the Dirichlet problem. Then $K$ is a Parreau-Widom set if $\sum_n g_K (c_n)<\infty$ where $\{c_n\}_n$ is the set of critical points of $g_K$. If $K$ is a Parreau-Widom set, we call $\overline{\mathbb{C}}\setminus K$ a Parreau-Widom domain. We remark that on a Parreau-Widom set $K$, the Lebesgue measure $dx_{\restriction K}$ restricted to $K$ and $d\mu_K$ are mutually absolutely continuous, see \cite{sodin}. In particular this implies that $|K|>0$. Regularity of $K$ with respect to the Dirichlet problem implies by Theorem 4.2.3 in \cite{Ransford} and Theorem 5.5.13 in \cite{Sim3} that $\mathrm{supp}(\mu_K)= K$. Therefore, 
\begin{equation}\label{ess}
|(x-a,x+a)\cap K|>0
\end{equation}
holds for all $x\in K$ and $a>0$.

As it was shown in \cite{peher}, \cite{sodin}, \cite{volberg} there are good and bad (these terms were used in \cite{volberg} in this fashion) Parreau-Widom sets. In this context, good and bad Parreau-Widom sets are classified in terms Direct Cauchy theorem (DCT), see \cite{christi} and \cite{yuditskii} for more on this issue.  We do not discuss what DCT means here since it is long and technical but we discuss recent results which involve this concept. 

For a given Jacobi operator $J=J(a_n,b_n)_{n=-\infty}^\infty$ let us denote $(a_n,b_n)_{n=-\infty}^\infty$ by $c_J$ for simplicity. We define the shift operator $S$ by 
\begin{equation*}
S(c_J)=S((a_n,b_n)_{n=-\infty}^\infty)= (a_{n+1},b_{n+1})_{n=-\infty}^\infty  
\end{equation*}

A bounded  $\mathbb{C}$-valued sequence $(d_n)_{n=-\infty}^\infty$ is called almost periodic if $\{(d_{n+k})_{n=-\infty}^\infty:\,\, k\in\mathbb{Z}\}$ is precompact in $l^\infty(\mathbb{Z})$. A Jacobi operator $J$ is called almost periodic if 
\begin{equation}\label{shift}
\{S^k(c_J):\,\, k\in\mathbb{Z}\}
\end{equation}

is precompact in $l^\infty(\mathbb{Z})\times l^\infty(\mathbb{Z})$ which is equipped with the metric 
$$d_l((a_n,b_n)_{n=-\infty}^\infty,(a^{\prime}_n,b^{\prime}_n)_{n=-\infty}^\infty)= \max\left(\sup_{n\in \mathbb{Z}}|a_n-a^\prime_n|,\sup_{n\in \mathbb{Z}}|b_n-b^\prime_n|\right) .$$

We denote the closure of the set given in \eqref{shift} by $\Omega_{c_J}$ and call it the hull of $c_J$. The hull can be made into a compact abelian group, with the composition as the group operation. Moreover, there is a unique invariant measure $m_{\Omega_{c_J}}$ which is also ergodic on $\Omega_{c_J}$, see Section 5.3 in \cite{teschl} for more details. This is equivalent to saying that the set of two sided Jacobi operators corresponding to the elements of the hull is a family of uniquely ergodic elements and actually from \eqref{it1} of Theorem \ref{bigtheo} below this family is strictly ergodic. See e.g. \cite{beckus} for more on ergodic Jacobi operators.

A one-sided Jacobi operator $J^+$ is called almost periodic if it is the restriction of a two sided almost periodic Jacobi operator $J$ to $\mathbb{N}$. 

Another equivalent characterization of almost periodicity is the following, see Appendix to Section 5.13 in \cite{Sim3}: For every $\varepsilon>0$ there is an $L\in\mathbb{N}$ such that for every $m\in\mathbb{Z}$ there is an $r$ such that $|m-r|<L$ implies that 
$d_l\left(S^m(c_J), S^r(c_J)\right)<\varepsilon$.

We list some properties of almost periodic sequences and Jacobi operators.
\begin{theorem}\label{bigtheo}
	\begin{enumerate}
		\item \label{it1} If $J$ is almost periodic and $c_{J^\prime}\in\Omega_{c_J}$ then $\Omega_{c_J}= \Omega_{c_{J^\prime}}$. Hence $\{S^n(x)\}_{n\in\mathbb{Z}}$ is dense in $\Omega_{c_J}$ for all $x\in\Omega_{c_J}$. 
		\item \label{it2}Let $(a_n)_{n=-\infty}^\infty$ be a real-valued almost periodic sequence with $\liminf_{n\rightarrow\infty}a_n= c$. Then for every $x=(x_n)_{n=-\infty}^\infty$ in the closure of $\{(a_{n+k})_{n=-\infty}^\infty:\,\, k\in\mathbb{Z}\}$ we have,
		\begin{equation}
		\inf_{n\in\mathbb{Z}} x_n= \liminf_{n\rightarrow\infty} x_n= \liminf_{n\rightarrow-\infty} x_n=c
		\end{equation}
		\item \label{it3}Let $J$ be almost periodic and $f:(a^\prime_n, b^\prime_n)_{n=-\infty}^\infty\rightarrow a^\prime_0$ for $(a^\prime_n, b^\prime_n)_{n=-\infty}^\infty\in\Omega_{c_J}$. If 
		\begin{equation}
		A(\Omega_{c_J}):=\int \log{f((a^\prime_n, b^\prime_n)_{n=-\infty}^\infty)} dm_{\Omega_{c_J}}>-\infty
		\end{equation}
		then $a_n^\prime>0$ for all $n\in\mathbb{Z}$ for $m_{\Omega_{c_J}}$ almost every $c^\prime$.
		\item \label{it4} If $J$ is almost periodic then there is a $\tilde{\nu}$ such that for all $c_{J^\prime}\in \Omega_{c_J}$ the DOS measure for $J^\prime(a^\prime_n,b^\prime_n)$ is $\tilde{\nu}$ provided that $a^\prime_n>0$ for all $n\in\mathbb{N}$. Moreover,
		\begin{equation}\label{essup}
		\sigma(J^\prime)=\sigma_{\mathrm{ess}}((J^{\prime})^+)=\mathrm{supp}(\tilde{\nu}).
		\end{equation}
	\end{enumerate}
\end{theorem}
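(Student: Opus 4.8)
The plan is to establish the four items in turn, reducing \eqref{it2}--\eqref{it4} to the minimality statement contained in \eqref{it1}. For \eqref{it1}, I would use the compact abelian group structure on $\Omega_{c_J}$ recalled just above (Section 5.3 of \cite{teschl}): $S$ acts on $\Omega_{c_J}$ as translation by one fixed group element, and $\Omega_{c_J}$ is by definition the $S$-orbit closure of $c_J$, so the orbit of $c_J$ is dense; since translating a dense set by a fixed group element keeps it dense, the orbit of \emph{every} point of $\Omega_{c_J}$ is dense, and in particular $\Omega_{c_{J'}}$, the orbit closure of $c_{J'}\in\Omega_{c_J}$, equals $\Omega_{c_J}$. (Alternatively, the Bohr characterization stated before the theorem --- relative density of the $\varepsilon$-almost periods of $c_J$ --- is exactly uniform recurrence of $c_J$, and a point is uniformly recurrent precisely when its orbit closure is minimal.)

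For \eqref{it2} the same remarks apply to the hull $\Omega$ of the scalar sequence $(a_n)$, so $(\Omega,S)$ is minimal and every $x\in\Omega$ is itself almost periodic (its orbit closure lies in the compact set $\Omega$). The three maps $x\mapsto\inf_n x_n$, $x\mapsto\liminf_{n\to+\infty}x_n$ and $x\mapsto\liminf_{n\to-\infty}x_n$ are $1$-Lipschitz for the sup-metric, hence continuous on $\Omega$, and each is $S$-invariant, so on the minimal system they are constant, with values $\gamma_1,\gamma_2,\gamma_3$; here $\gamma_2=c$ by hypothesis, and trivially $\gamma_1\le\gamma_2$, $\gamma_1\le\gamma_3$. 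For the reverse inequalities, fix $x\in\Omega$ and $\varepsilon>0$, pick $n_0$ with $x_{n_0}<\gamma_1+\varepsilon$, and use relative density of the $\varepsilon$-almost periods of $x$ to get arbitrarily large positive, resp.\ negative, almost periods $T$: then $x_{n_0+T}<\gamma_1+2\varepsilon$ with $n_0+T\to\pm\infty$, so $\gamma_2\le\gamma_1$ and $\gamma_3\le\gamma_1$, whence $\gamma_1=\gamma_2=\gamma_3=c$. For \eqref{it3}, since $f$ is continuous and nonnegative on the compact set $\Omega_{c_J}$, $\log f$ is bounded above, so $A(\Omega_{c_J})>-\infty$ forces $m_{\Omega_{c_J}}(\{f=0\})=0$, i.e.\ $a'_0>0$ for $m_{\Omega_{c_J}}$-a.e.\ $c'$; since the set $\{c':a'_k=0\}$ equals $S^{-k}\{c':a'_0=0\}$ and $m_{\Omega_{c_J}}$ is $S$-invariant, every such set is null, hence so is their union over $k\in\mathbb{Z}$, which is the claim.

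For \eqref{it4}, the plan is to combine \eqref{it1} with unique ergodicity: $(\Omega_{c_J},S)$ is minimal with unique invariant measure $m_{\Omega_{c_J}}$, hence strictly ergodic, so for every $G\in C(\Omega_{c_J})$ the averages $\frac1n\sum_{k=0}^{n-1}G(S^kc')$ converge to $\int G\,dm_{\Omega_{c_J}}$ uniformly in $c'$. Taking $G=F_f$, $F_f(c')=\langle\delta_0,f(J(c'))\delta_0\rangle$ for a polynomial $f$ (continuous, being a polynomial in finitely many coordinates of $c'$), and using that for admissible $c'$ (those with $a'_n>0$ for all $n\in\mathbb{N}$) the zeros of the $n$-th monic orthogonal polynomial of $\mu_{c'}$ are exactly the eigenvalues of the top-left $n\times n$ truncation of $J^+(c')$, one gets $\int f\,d\delta_n=\frac1n\mathrm{Tr}\,f(\text{truncation})$, which differs from $\frac1n\sum_k F_f(S^kc')$ only by an $O(\deg f)$ boundary term; hence $\int f\,d\delta_n\to\int F_f\,dm_{\Omega_{c_J}}$ uniformly over admissible $c'$. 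Because all operators involved have spectra in one fixed compact interval, $f\mapsto\int F_f\,dm_{\Omega_{c_J}}$ is a positive normalized functional on polynomials, hence integration against a probability measure $\tilde{\nu}$, and uniform polynomial approximation extends the convergence to all continuous $f$; thus $\tilde{\nu}$ is the DOS measure of every admissible $J^+(c')$. For \eqref{essup} I would invoke the standard theory of strictly ergodic Jacobi operators: the support of a density of states measure always lies in the essential spectrum of the corresponding one-sided operator (only boundedly many, uniformly in $n$, zeros of $p_n$ sit in any gap of the essential spectrum), the spectrum is constant along the minimal family $\{J(c'):c'\in\Omega_{c_J}\}$ and agrees there with the essential spectrum, and passing from a two-sided almost periodic operator to a one-sided restriction only splits off a summand and adds a rank-one coupling, neither of which affects the essential spectrum; see \cite{teschl}, \cite{Sim3}, \cite{beckus}, \cite{simon1}. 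I expect \eqref{it4} to be the main obstacle: the truncation/boundary estimates and the passage from polynomial to continuous $f$ need care, and the spectral identities in \eqref{essup} are most cleanly imported from the ergodic-operator literature rather than re-derived, whereas \eqref{it1}--\eqref{it3} are essentially soft once the group- and measure-theoretic framework recalled above is in place.
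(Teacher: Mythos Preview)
Your argument is correct. The paper's own proof is essentially a list of references: it says \eqref{it1} and \eqref{it2} ``easily follow'' from the preceding discussion of the hull and the Bohr characterization, cites Section~3 of \cite{carmona} for \eqref{it3}, and for \eqref{it4} combines Section~2 of \cite{lenz} (for $\sigma(J')=\mathrm{supp}(\tilde\nu)$, which also supplies the existence of $\tilde\nu$ in the strictly ergodic setting), Proposition~1.8 of \cite{simon1} (for $\mathrm{supp}(\tilde\nu)\subset\sigma_{\mathrm{ess}}((J')^+)$), and Lemma~3.7 of \cite{teschl} (for $\sigma_{\mathrm{ess}}((J')^+)\subset\sigma(J')$).

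Your treatment of \eqref{it1} and \eqref{it2} simply unpacks what the paper means by ``the above discussion'', so there is no real divergence there. For \eqref{it3} your direct argument --- $\log f$ is bounded above, so a finite integral forces $m_{\Omega_{c_J}}(\{f=0\})=0$, and shift-invariance propagates this to every coordinate --- is more self-contained than the citation to \cite{carmona} and buys you independence from that reference. For the DOS part of \eqref{it4} your strict-ergodicity computation (uniform convergence of Birkhoff averages of $F_f$, trace formula for the truncations, boundary term $O(\deg f/n)$) is exactly the standard proof and spells out what the paper imports wholesale from \cite{lenz}. The only place the paper is sharper is in the chain \eqref{essup}: it isolates the three inclusions with pinpoint citations, whereas your appeal to ``standard theory of strictly ergodic Jacobi operators'' is vaguer; if you want to tighten that step, cite \cite{lenz}, Proposition~1.8 of \cite{simon1}, and Lemma~3.7 of \cite{teschl} for the respective pieces.
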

\begin{proof}
	From the above discussion, \eqref{it1} and \eqref{it2} easily follow.  Proof of \eqref{it3} can be found in Section 3 of \cite{carmona}. In \eqref{it4}, $\sigma(J^\prime)=\mathrm{supp}(\tilde{\nu})$ can be found in Section 2 in \cite{lenz}. By Proposition 1.8 in \cite{simon1} we have $\mathrm{supp}(\tilde{\nu})\subset \sigma_{\mathrm{ess}}((J^{\prime})^+)$ in \eqref{it4}. But since $\sigma_{\mathrm{ess}}((J^{\prime})^+)\subset \sigma(J^\prime)$ by Lemma 3.7 in \cite{teschl} we have \eqref{essup}.
\end{proof}

For a given $c=(a_n,b_n)_{n=-\infty}^\infty$ with $a_n\neq 0$ for $n\geq 0$, let 
\begin{equation}
M_n(z)=\begin{pmatrix}
(z-b_n)/a_n & -{a_{n-1}/a_n}\\
1& 0
\end{pmatrix},
\end{equation}
and 
\begin{equation*}
M^{(n)}(z)= M_n(z)\cdots M_1(z),
\end{equation*}
for $z\in\mathbb{C}^+$. If $J$ is almost periodic and $A(\Omega_{c_J})>-\infty$ then there is a function $\tilde{\gamma}$ (see e.g. (3.6) in \cite{carmona}) such that 
$$\tilde{\gamma}(z)=\lim_{n\rightarrow\infty} \frac{1}{n} \log{\| M^{(n)}(z)\|}$$ for $m_{\Omega_{c_J}}$ almost every $c_{J^\prime}$. In particular, $\lim_{n\rightarrow\infty} \log{(a_1\cdots a_n)^{1/n}}=\lim_{n\rightarrow\infty}(1/n)\sum_{k=1}^n \log{a_k} = A(\Omega_{c_J})$ for $m_{\Omega_{c_J}}$ almost every $c_{J^\prime}$. Moreover, we also have 
$\tilde{\gamma}(z)= \int \log{|z-t|}d\tilde{\nu}(t)-A(\Omega_{c_J})>0$. In addition, 
\begin{equation*}
\gamma(x):= \lim_{y\rightarrow 0^+} \tilde{\gamma}(x+iy)
\end{equation*}
 exists for all $x\in\mathbb{R}$, see Section 2 in \cite{hur}. Let $\gamma:= \tilde{\gamma}$ on $\mathbb{C}^+$. Then the function $\gamma$ which is defined on $\mathbb{C}\cup \mathbb{R}$ is called the Lyapunov exponent. See e.g. Section 7 of \cite{simon1} and \cite{knill2} for more on these concepts.
 
The next result concerning regular measures will be used when we discuss Jacobi matrices on Julia sets. We have $\mathrm{Cap(supp}(\mu))= \mathrm{Cap(ess\,supp}(\mu)),$ see Section 1 of \cite{Sim3}. Thus, for an almost periodic $J$, we have $\mathrm{Cap}(\sigma(J))= \mathrm{Cap}(\sigma(J^+))$ by part \eqref{it4} of Theorem \ref{bigtheo}. If $\eta$ is regular and $\mathrm{Cap(supp}(\eta))>0$, then the DOS measure for $\eta$ is $\mu_{\mathrm{(supp}(\eta))}$, see \cite{Sim3}.

\begin{proposition}\label{prop}
Let $J(a_n,b_n)_{n=-\infty}^\infty$ be almost periodic and $\mathrm{Cap}(\sigma(J))>0$. Moreover, let $J^+$ be regular. Then $A(\Omega_{c_J})>-\infty$ and $\gamma=g_{\sigma(J)}$ on $\mathbb{C}^+$. As a corollary $\gamma=0$ quasi-everywhere on $\sigma(J)$. Besides, for $m_{\Omega_{c_J}}$ almost every $J^\prime$, $(J^\prime)^+$ is regular in the sense of Stahl-Totik.
\end{proposition}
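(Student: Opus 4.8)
The plan is to prove the four assertions of the proposition in the order (a) $A(\Omega_{c_J})>-\infty$, (b) $\gamma=g_{\sigma(J)}$ on $\mathbb{C}^+$, (c) $\gamma=0$ quasi-everywhere on $\sigma(J)$, (d) $(J^\prime)^+$ is regular for $m_{\Omega_{c_J}}$-a.e.\ $J^\prime$. First I would extract (a) from regularity of $J^+$ by comparing the Birkhoff averages of $\log a_n$ with $A(\Omega_{c_J})$, using unique ergodicity of the shift on $\Omega_{c_J}$. Once (a) holds, I would feed it into the Thouless-type formula recorded after Theorem \ref{bigtheo}, identify the density of states measure $\tilde\nu$ with the equilibrium measure $\mu_{\sigma(J)}$ (this is where regularity of $J^+$ enters a second time), deduce $\gamma=g_{\sigma(J)}+c_0$ on $\mathbb{C}^+$ for a constant $c_0$, and pin down $c_0=0$ from positivity of $\gamma$; claims (b) and (c) follow. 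Finally (d) drops out of Birkhoff's ergodic theorem together with the constant identified in (b).

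For (a): let $\mu$ be the spectral measure of $J^+$. As observed in the text preceding the proposition, $\mathrm{Cap}(\sigma(J^+))=\mathrm{Cap}(\sigma(J))>0$, and since $\mathrm{supp}(\mu)=\sigma(J^+)$, regularity of $J^+$ gives $\tfrac1n\sum_{k=1}^n\log a_k\to\log\mathrm{Cap}(\sigma(J))$. To compare with $A(\Omega_{c_J})=\int\log f\,dm_{\Omega_{c_J}}$, where $f$ is the coordinate map $(a_n^\prime,b_n^\prime)_{n}\mapsto a_0^\prime$ as in \eqref{it3} of Theorem \ref{bigtheo}, I would use that $\log\circ f$ is continuous into $[-\infty,\infty)$, so for each $M>0$ the function $h_M:=\max(\log f,-M)$ is bounded and continuous on $\Omega_{c_J}$. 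Since $(\Omega_{c_J},S)$ is uniquely ergodic, $\tfrac1n\sum_{k=1}^n h_M(S^k c_J)\to\int h_M\,dm_{\Omega_{c_J}}$; as $f(S^k c_J)=a_k$, whence $\log a_k\le h_M(S^k c_J)$, letting $n\to\infty$ yields $\log\mathrm{Cap}(\sigma(J))\le\int h_M\,dm_{\Omega_{c_J}}$. Letting $M\to\infty$ and using monotone convergence ($h_M\downarrow\log f$, dominated above by the bounded $h_1$) gives $\log\mathrm{Cap}(\sigma(J))\le A(\Omega_{c_J})$. Since $\mathrm{Cap}(\sigma(J))>0$ this proves (a), and I keep the inequality $A(\Omega_{c_J})\ge\log\mathrm{Cap}(\sigma(J))$ for later.

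For (b) and (c): now that $A(\Omega_{c_J})>-\infty$, the formula recorded after Theorem \ref{bigtheo} gives $\gamma(z)=\int\log|z-t|\,d\tilde\nu(t)-A(\Omega_{c_J})$ for $z\in\mathbb{C}^+$, where $\tilde\nu$ is the density of states measure of $J$, equivalently of $J^+=J^+(\mu)$. Because $\mu$ is regular with $\mathrm{Cap}(\mathrm{supp}(\mu))>0$, its density of states measure is $\mu_{\mathrm{supp}(\mu)}$; and since $\mathrm{supp}(\mu)\setminus\sigma(J)=\sigma(J^+)\setminus\sigma_{\mathrm{ess}}(J^+)$ is a discrete, hence polar, set, $\mu_{\mathrm{supp}(\mu)}=\mu_{\sigma(J)}$. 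Substituting $\tilde\nu=\mu_{\sigma(J)}$ and comparing with $g_{\sigma(J)}(z)=\int\log|z-t|\,d\mu_{\sigma(J)}(t)-\log\mathrm{Cap}(\sigma(J))$ (valid on $\mathbb{C}\setminus\sigma(J)\supseteq\mathbb{C}^+$, which is connected as $\sigma(J)\subseteq\mathbb{R}$; see \cite{Ransford}) gives $\gamma=g_{\sigma(J)}+c_0$ on $\mathbb{C}^+$ with $c_0:=\log\mathrm{Cap}(\sigma(J))-A(\Omega_{c_J})\le0$ by (a). For the reverse inequality I would use $\gamma>0$ on $\mathbb{C}^+$: since the irregular points of $\sigma(J)$ form a polar set (Kellogg's theorem) and $\sigma(J)$ is non-polar, $\sigma(J)$ has a regular point $x_0$, at which $g_{\sigma(J)}(x_0+iy)\to0$ as $y\to0^+$, so $0\le\lim_{y\to0^+}\gamma(x_0+iy)=c_0$. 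Hence $c_0=0$, i.e.\ $A(\Omega_{c_J})=\log\mathrm{Cap}(\sigma(J))$ and $\gamma=g_{\sigma(J)}$ on $\mathbb{C}^+$. Claim (c) then follows: $\gamma(x)=\lim_{y\to0^+}g_{\sigma(J)}(x+iy)=0$ for every regular point $x\in\sigma(J)$, i.e.\ quasi-everywhere on $\sigma(J)$.

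For (d): since $A(\Omega_{c_J})>-\infty$, part \eqref{it3} of Theorem \ref{bigtheo} (with $f$ as above) gives $a_n^\prime>0$ for all $n\in\mathbb{Z}$ for $m_{\Omega_{c_J}}$-a.e.\ $c^\prime$; fix such a $c^\prime$ and let $\mu^\prime$ be the spectral measure of $(J^\prime)^+$. The Birkhoff statement recorded after Theorem \ref{bigtheo}, together with $\|p_n(\cdot;\mu^\prime)\|_{L^2(\mu^\prime)}=a_1^\prime\cdots a_n^\prime$, gives $\|p_n(\cdot;\mu^\prime)\|_{L^2(\mu^\prime)}^{1/n}\to e^{A(\Omega_{c_J})}=\mathrm{Cap}(\sigma(J))$, while \eqref{it4} of Theorem \ref{bigtheo} gives $\mathrm{Cap}(\mathrm{supp}(\mu^\prime))=\mathrm{Cap}(\sigma_{\mathrm{ess}}((J^\prime)^+))=\mathrm{Cap}(\sigma(J))$; comparing these, $\mu^\prime\in{\mathrm{\bf{Reg}}}$, i.e.\ $(J^\prime)^+$ is regular. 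The main obstacle is the normalization $c_0=0$ in step (b): the inequality $A(\Omega_{c_J})\ge\log\mathrm{Cap}(\sigma(J))$ is soft, but the opposite one genuinely uses potential theory---positivity of $\gamma$ on $\mathbb{C}^+$ and the existence of a regular boundary point of $\sigma(J)$---and is exactly where non-polarity of $\sigma(J)$ cannot be dispensed with. (Note that the identification $\tilde\nu=\mu_{\sigma(J)}$, by contrast, needs only regularity of $\mu$ in the sense of Stahl--Totik, not regularity of $\sigma(J)$ for the Dirichlet problem.)
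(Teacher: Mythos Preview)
Your proof is correct and tracks the paper's argument closely for parts (b)--(d): both identify $\tilde\nu$ with the equilibrium measure via regularity of $J^+$, write $\gamma-g_{\sigma(J)}$ as the constant $\log\mathrm{Cap}(\sigma(J))-A(\Omega_{c_J})$, and pin this constant to zero using positivity of $\gamma$ on $\mathbb{C}^+$ together with the vanishing of $g_{\sigma(J)}$ near regular boundary points. Your treatment is in fact a bit more explicit than the paper's in two places: you spell out why $\mu_{\mathrm{supp}(\mu)}=\mu_{\sigma(J)}$ (the difference $\sigma(J^+)\setminus\sigma(J)$ is discrete, hence polar), and you invoke Kellogg's theorem to guarantee a regular boundary point, where the paper simply asserts that $g_{\sigma(J)}$ can be made arbitrarily small.

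The genuine difference is in part (a). The paper establishes $A(\Omega_{c_J})\ge\log\mathrm{Cap}(\sigma(J))$ by an $\varepsilon$-perturbation: it passes to $J^\varepsilon=J(a_n+\varepsilon,b_n)$, uses $\liminf(a_n+\varepsilon)>0$ together with Theorem~7.1(f) of \cite{simon1} to compute $A(\Omega_{c_{J^\varepsilon}})$ as the limit of Ces\`aro means, and then invokes an upper-semicontinuity result from \cite{hur} to let $\varepsilon\to0^+$. Your route is more elementary and self-contained: truncate $\log f$ to the bounded continuous $h_M$, use unique ergodicity of $(\Omega_{c_J},S)$ to get convergence of Birkhoff averages at the \emph{specific} point $c_J$, compare with the known limit $\log\mathrm{Cap}(\sigma(J))$, and send $M\to\infty$ by monotone convergence. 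This avoids both external citations and works directly on the hull, at the modest cost of making the unique ergodicity explicit. Either way one arrives at the same key inequality, and from there the two proofs coincide.
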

\begin{proof}
	We have $\lim_{n\rightarrow\infty}(a_1\cdots a_n)^{1/n}=\mathrm{Cap}({\sigma(J)})$ by regularity and almost periodicity of $J^+$. Let $J^\varepsilon= J(a_n+\varepsilon,b_n)_{n=-\infty}^\infty$ for $\varepsilon>0$. Then $J^\epsilon$ is also almost periodic and $\liminf_{n\rightarrow\infty} (a_n+\varepsilon)>0$. This implies that $A(\Omega_{c_{J^\varepsilon}})>-\infty$. Therefore, $\limsup_{\epsilon\rightarrow 0^+}A(\Omega_{c_{J^\varepsilon}})\leq A(\Omega_{c_J})$ by Section 5 in \cite{hur}. Since $\liminf_{n\rightarrow\infty} (a_n+\varepsilon)>0$, by Theorem 7.1 (f) in \cite{simon1}, $\exp^{A(\Omega_{c_{J^\varepsilon}})}= \lim_{n\rightarrow\infty}((a_1+\varepsilon)\cdots (a_n+\varepsilon))^{1/n}$ holds. Thus, 
	\begin{equation}\label{yard}
	\exp^{A(\Omega_{c_{J}})}\geq \mathrm{Cap}({\sigma(J)})>0.
	\end{equation}
	 Moreover, by part \eqref{it4} of Theorem \ref{bigtheo}, we have $\tilde{\nu}=\mu_{\sigma(J)}$. 
	
	Now, consider the constant function $$h(z):=\gamma(z)-g_{\sigma(J)}(z)= -A(\Omega_{c_{J}})+\log{\mathrm{Cap}(\sigma(J))}$$ on $\mathbb{C}^+$. Since $g_{\sigma(J^{\prime})}$ can be as close to $0$ as we wish and $\gamma>0$, $h$ should be non-negative on $\mathbb{C}^+$. This gives 
	\begin{equation}\label{yard2}
		\exp^{A(\Omega_{c_{J}})}\leq\mathrm{Cap}(\sigma(J)).
	\end{equation}
 By \eqref{yard} and \eqref{yard2}, $h(z)=0$ and $\gamma=g_{\sigma(J)}$ on $\mathbb{C}^+$. Since $\lim_{y\rightarrow 0^+} g_{\sigma(J)}(x+iy)=0$ quasi-everywhere on $\sigma(J)$ (see e.g. p. 53-54 in \cite{saff}) we have $\gamma=0$ quasi-everywhere on $\sigma(J)$. 
 
 By \eqref{yard} and \eqref{yard2}, $A(\Omega_{c_{J}})=\log{\mathrm{Cap} (\sigma(J))}.$ Thus  $\lim_{n\rightarrow\infty} (a_1\cdots a_n)^{1/n}= \mathrm{Cap} (\sigma(J))$ for $m_{\Omega_{c_J}}$ almost every $c_{J^\prime}$. This proves the last statement.
\end{proof}

Due to Kotani's theory, $\gamma=0$ on the spectrum has some consequences. For this, we need to discuss another concept. If we are given $J$, then $J$ is called reflectionless on $A\subset \mathbb{R}$ if 
$$\lim_{\epsilon\rightarrow 0^+} \langle \delta_n, (J-z-i\epsilon)^{-1}\delta_n\rangle=0$$
for almost everywhere $z\in A$ and for all $n\in\mathbb{Z}$. 

If $J(a_n,b_n)_{n=-\infty}^\infty$ is almost periodic, $A(\Omega_{c_J})>-\infty$ and $\gamma=0$ on $\sigma(J)$ quasi-everywhere then for $\mu_{\Omega_{c_J}}$ almost every $c_{J^\prime}$, $J^\prime$ is reflectionless on $\sigma (J)$, see \cite{kotani}, \cite{anand}. Moreover if $|\sigma(J)|>0$ then being reflectionless implies the existence of a non-trivial absolutely continuous part. In this case, we have $inf_{n\in\mathbb{Z}} a_n>0$ by the following argument: Let us assume that $inf_{n\in\mathbb{Z}} a_n=0$. Then by part \eqref{it2} of Theorem \ref{bigtheo}, $\liminf_{n\rightarrow \infty} a^\prime_n = \liminf_{n\rightarrow -\infty} a^\prime_n=0$ for almost every $c_{J^\prime(a_n^\prime,b_n^\prime)_{n=-\infty}^\infty}$ in $\Omega_{c_J}$. This implies by \cite{dombrowski} that, $(J^\prime)^+$ and $(J^\prime)^-$ does not have an absolutely continuous spectrum for almost every $c_{J^\prime(a_n^\prime,b_n^\prime)_{n=-\infty}^\infty}$ in $\Omega_{c_J}$. Since $\sigma_{\mathrm{ac}}(J^\prime)= \sigma_{\mathrm{ac}}((J^\prime)^+)\cup \sigma_{\mathrm{ac}}((J^\prime)^-)$ holds true (see e.g. Lemma 3.11 in \cite{teschl}) for almost every $c_{J^\prime(a_n^\prime,b_n^\prime)_{n=-\infty}^\infty}$, $J^\prime$ has no absolutely continuous part which contradicts with being reflectionless. Now let us sketch the proof of a result which is a generalization of Proposition 4.1 (we skip one of the equivalent statements) in \cite{kruger}. The proof is almost identical. 
\begin{theorem}\label{dct}
Let $K$ be a Parreau-Widom set such that DCT holds on $\overline{\mathbb{C}}\setminus K$ and $J$ be a Jacobi operator with $\sigma(J)=K$. Then the following conditions are equivalent.
\begin{enumerate}[(i)]
\item J is reflectionless on $K$.
\item $J$ is almost periodic and $J^+$ and $J^-$ are regular.
\end{enumerate} 
Two sided Jacobi operators satifying these conditions are called the isospectral torus of $K$ which we denote by $\mathrm{IT}(K)$.
\end{theorem}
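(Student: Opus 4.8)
The plan is to prove the two implications separately, exploiting the machinery assembled just before the statement, and following the structure of Proposition~4.1 in \cite{kruger} with the finite gap set replaced by a Parreau--Widom set on which DCT holds.

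\textbf{From (ii) to (i).} Suppose $J$ is almost periodic with $\sigma(J)=K$ and both $J^+$ and $J^-$ regular. Since $K$ is a Parreau--Widom set it is non-polar, regular for the Dirichlet problem, and has $|K|>0$; in particular $\mathrm{Cap}(\sigma(J))>0$. Applying Proposition~\ref{prop} to $J$ (using regularity of $J^+$), we get $A(\Omega_{c_J})>-\infty$ and $\gamma=0$ quasi-everywhere on $\sigma(J)$. Then the Kotani-theoretic input quoted after Proposition~\ref{prop} (\cite{kotani},\cite{anand}) shows that for $m_{\Omega_{c_J}}$-almost every $c_{J'}$, $J'$ is reflectionless on $K$. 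To upgrade this to a statement about $J$ itself, I would use that on a Parreau--Widom set with DCT the reflectionless two-sided operators with spectrum $K$ form exactly the orbit closure coming from any one of them, and that being reflectionless is preserved under the hull: more precisely, the set of reflectionless operators on $K$ is closed in $\Omega_{c_J}$ and invariant under the shift $S$, hence by part \eqref{it1} of Theorem~\ref{bigtheo} (minimality of the hull) it is either empty or all of $\Omega_{c_J}$; since it has full $m_{\Omega_{c_J}}$-measure it is nonempty, so it is everything, and in particular $J$ is reflectionless on $K$.

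\textbf{From (i) to (ii).} Suppose $J$ is reflectionless on $K$ with $\sigma(J)=K$. Here the DCT hypothesis on $\overline{\mathbb{C}}\setminus K$ is essential: the theory of Sodin--Yuditskii and its refinements (\cite{sodin},\cite{christi},\cite{yuditskii}) shows that when DCT holds, every Jacobi matrix reflectionless on the Parreau--Widom set $K$ with spectrum $K$ has almost periodic coefficients, and in fact the collection of such operators is a torus (the image of the character group of $\pi_1(\overline{\mathbb{C}}\setminus K)$) on which the shift acts minimally. This gives almost periodicity of $J$. It remains to get regularity of $J^+$ and $J^-$. Reflectionlessness on $K$ with $|K|>0$ forces a nontrivial absolutely continuous part of the spectral measure on all of $K$, and by the argument reproduced in the excerpt (via \cite{dombrowski}, part \eqref{it2} of Theorem~\ref{bigtheo}, and $\sigma_{\mathrm{ac}}(J')=\sigma_{\mathrm{ac}}((J')^+)\cup\sigma_{\mathrm{ac}}((J')^-)$) one obtains $\inf_{n}a_n>0$, so $A(\Omega_{c_J})>-\infty$. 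Then $\gamma=g_K$ on $\mathbb{C}^+$: indeed $\gamma(z)=\int\log|z-t|\,d\tilde\nu(t)-A(\Omega_{c_J})$ with $\tilde\nu$ the DOS, and reflectionlessness plus DCT forces $\tilde\nu=\mu_K$ and $A(\Omega_{c_J})=\log\mathrm{Cap}(K)$ (this is where DCT is used again — without it the Szeg\H{o}-type identity relating the a.c.\ spectrum, the DOS and the capacity can fail). Consequently $\lim_n (a_1\cdots a_n)^{1/n}=\exp A(\Omega_{c_J})=\mathrm{Cap}(K)=\mathrm{Cap}(\sigma(J^+))$, which is exactly regularity of $J^+$ in the sense of Stahl--Totik; the same argument applied to $J^-$ (or using shift-invariance of the DOS and of $A$) gives regularity of $J^-$.

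\textbf{Main obstacle.} The routine part is the capacity/Lyapunov-exponent bookkeeping, which is essentially a repackaging of Proposition~\ref{prop}. The delicate point is making the DCT hypothesis do its job in both directions without reproving the Sodin--Yuditskii theory: concretely, that reflectionlessness on a Parreau--Widom set $K$ with DCT is equivalent to membership in the ``isospectral torus'' built from characters, and that on this torus the DOS equals $\mu_K$ and $A(\Omega_{c_J})=\log\mathrm{Cap}(K)$. I would isolate exactly which statements from \cite{sodin}, \cite{christi}, \cite{yuditskii}, \cite{kruger} are being invoked and cite them precisely, since — as the excerpt emphasizes — the ``bad'' Parreau--Widom sets (DCT failing) are precisely where this equivalence breaks down, so the proof cannot be purely soft. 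A secondary subtlety is passing from the $m_{\Omega_{c_J}}$-a.e.\ reflectionlessness (output of Kotani theory) to reflectionlessness of $J$ itself; here the closedness and shift-invariance of the reflectionless set inside the hull, together with minimality from Theorem~\ref{bigtheo}\eqref{it1}, is the clean way to close the gap, but it should be stated carefully.
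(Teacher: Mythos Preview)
Your outline is essentially correct and matches the paper's strategy, but the packaging differs in two places.

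For (i)$\Rightarrow$(ii), the paper does exactly what you do for almost periodicity (cite Sodin--Yuditskii \cite{sodin}, cf.\ \cite{christi}), but for regularity of $J^\pm$ it simply invokes Corollary~2.1 of \cite{christi} rather than rederiving $\tilde\nu=\mu_K$ and $A(\Omega_{c_J})=\log\mathrm{Cap}(K)$ from scratch. Your sketch of that derivation is plausible but vague at the crucial step (``reflectionlessness plus DCT forces $\tilde\nu=\mu_K$''); Christiansen's corollary already packages this, so citing it is both shorter and safer.

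For (ii)$\Rightarrow$(i), the paper also starts from Proposition~\ref{prop} to get $\gamma=0$ a.e.\ on $K$, but then follows the proof of (2)$\Rightarrow$(1) in Proposition~4.1 of \cite{kruger}, using Remling's theorem (Theorem~1.1 in \cite{christi}, Theorem~1.4 in \cite{remling}) directly: since $J$ is almost periodic it is its own right limit, and Remling gives that every right limit is reflectionless on $\Sigma_{\mathrm{ac}}(J^+)\supset K$. The paper's only extra remark is that Remling's theorem is applicable on Parreau--Widom sets because \eqref{ess} holds. Your route---Kotani gives reflectionlessness for $m_{\Omega_{c_J}}$-a.e.\ element, then upgrade via minimality plus closedness of the reflectionless class---is a legitimate alternative, but the ``closedness'' you flag as a secondary subtlety is not soft: the standard way to justify that a limit of reflectionless operators is reflectionless is precisely Remling's theorem (or the Poltoratski--Remling machinery), so your detour ultimately leans on the same ingredient the paper invokes directly. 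If you keep your formulation, you should cite the closedness result explicitly rather than assert it.
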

\begin{proof}
(i)$\implies$ (ii) Almost periodicity of reflectionless operators in this case is a result of Sodin and Yuditskii \cite{sodin}, see also Section 1 in \cite{christi}. Regularity of $J^+$ and $J^-$ can be found in Corollary 2.1 in \cite{christi}.

(ii)$\implies$ (i) By Proposition \ref{prop}, we have $\gamma=0$ almost everywhere on $K$. Then the proof is the same with the proof of (2)$\implies$ (1) in Proposition 4.1 of \cite{kruger}. We note that Remling's Theorem (Theorem 1.1 in \cite{christi} and see also Theorem 1.4 in \cite{remling}) is applicable on Parreau-Widom sets since \eqref{ess} holds.
\end{proof}

There are Parreau-Widom sets such that the equivalence in Theorem \ref{dct} does not hold. The following remarkable result was proven by Volberg and Yudistkii, see Theorem 1.7 and Theorem 1.8 in \cite{volberg}.

\begin{theorem}\label{volyud}
	Let $K=[b_0,a_0]\setminus \cup_{j\geq 1} (e_j,f_j)$ be a Parreau-Widom set such that DCT does not hold on $\overline{\mathbb{C}}\setminus K$. Suppose that the following conditions hold:
	\begin{enumerate}[(i)]
		\item Every reflectionless operator $J$ on $K$ with $\sigma(J)=K$ has purely absolutely continuous spectrum.
		\item The frequencies $\{\mu_K([b_0, b_j])\}_{j\geq 1}$ are rationally independent.
	\end{enumerate}
	Then none of the reflectionless Jacobi operators on $K$ whose spectrum is $K$ are almost periodic.
\end{theorem}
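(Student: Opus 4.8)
The plan is a proof by contradiction pitting the Sodin--Yuditskii functional model for reflectionless operators against the minimality of the shift supplied by hypothesis (ii). Write $R(K)$ for the set of two-sided Jacobi operators that are reflectionless on $K$ and have spectrum $K$, and suppose, for contradiction, that some $J_0\in R(K)$ is almost periodic; let $\Omega:=\Omega_{c_{J_0}}$ be its hull, a compact abelian group on which $S$ acts by translation. I would first show $\Omega\subseteq R(K)$. One checks $\inf_{n}a_n^{(0)}>0$: if not, then $\liminf_{n\to\pm\infty}a_n^{(0)}=0$ by part \eqref{it2} of Theorem \ref{bigtheo} applied to the almost periodic sequence $(a_n^{(0)})_{n\in\mathbb{Z}}$, so by \cite{dombrowski} neither $(J_0)^+$ nor $(J_0)^-$ has absolutely continuous spectrum, whence $J_0$ has no absolutely continuous part; this contradicts the fact recorded in the paragraph before Theorem \ref{dct} that a reflectionless operator on a set of positive Lebesgue measure has a nontrivial absolutely continuous part (applicable since $|K|>0$ on a Parreau--Widom set). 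Consequently every $c\in\Omega$, being a uniform limit of shifts of $c_{J_0}$, has $a$-coefficients bounded below by $\inf_n a_n^{(0)}>0$, so $J_c$ is a genuine Jacobi operator; part \eqref{it4} of Theorem \ref{bigtheo} then gives $\sigma(J_c)=\mathrm{supp}(\tilde{\nu})=\sigma(J_0)=K$, and $J_c$ is reflectionless on $K$ because this class is closed under norm-resolvent limits (Remling's theory, cf.\ \cite{remling}). Hence $\Omega\subseteq R(K)$.

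Next I would bring in the parametrization. Put $\Gamma=\pi_1(\overline{\mathbb{C}}\setminus K)$ with compact character group $\Gamma^*$. Under hypothesis (i) the functional model applies to every member of $R(K)$ and produces a bijection $\mathcal{J}\colon\Gamma^*\to R(K)$, $\alpha\mapsto J_\alpha$, which intertwines the shift $S$ with translation by a fixed $\mu\in\Gamma^*$ whose coordinates relative to the generators of $\Gamma$ are the frequencies $\{\mu_K([b_0,b_j])\}_{j\geq 1}$. Two facts about this model enter: (a) the recovery map $\psi:=\mathcal{J}^{-1}\colon R(K)\to\Gamma^*$, which reads the character off the Jacobi data, is continuous for the $d_l$-topology; and (b) continuity of $\mathcal{J}$ itself on $\Gamma^*$ would force DCT to hold on $\overline{\mathbb{C}}\setminus K$. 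Fact (a) is the elementary direction; fact (b) is essentially the characterization of DCT through the functional model, and extracting it from the Hardy-space theory on surfaces of Widom type is the step I expect to be the main obstacle — it is precisely the input I would quote from \cite{volberg} (see also \cite{christi}, \cite{yuditskii}).

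The contradiction is then immediate. By hypothesis (ii) the frequencies $\{\mu_K([b_0,b_j])\}_{j\geq 1}$ are rationally independent, so by Weyl's equidistribution criterion translation by $\mu$ is minimal on $\Gamma^*$; hence the $S$-orbit of $J_0$ has image $\{\psi(J_0)+n\mu:n\in\mathbb{Z}\}$ dense in $\Gamma^*$. Since $\psi$ is continuous and $\Omega$ is compact, $\psi(\Omega)$ is a closed subset of $\Gamma^*$ containing this dense set, so $\psi(\Omega)=\Gamma^*$; as $\psi$ is a bijection of $R(K)$ onto $\Gamma^*$ and $\Omega\subseteq R(K)$, injectivity forces $\Omega=R(K)$, and therefore $\psi\colon\Omega\to\Gamma^*$ is a continuous bijection of compact Hausdorff spaces, hence a homeomorphism. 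Then $\mathcal{J}=\psi^{-1}$ is continuous, which by fact (b) means DCT holds on $\overline{\mathbb{C}}\setminus K$ — contradicting the hypothesis. Hence no reflectionless Jacobi operator on $K$ with spectrum $K$ is almost periodic, as claimed.
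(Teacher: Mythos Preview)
The paper does not supply a proof of this theorem; it is quoted verbatim from Volberg--Yuditskii \cite{volberg} (their Theorems 1.7 and 1.8) and used as a black box input to the subsequent Theorems \ref{IT} and \ref{szeg}. So there is no argument in the paper to compare your proposal against.

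On its own merits, your sketch has the right architecture --- hull contained in $R(K)$, parametrization by characters, minimality from rational independence, contradiction with DCT failure --- and this is indeed the shape of the Volberg--Yuditskii argument. But there is a genuine gap in how you set up the functional model. You posit a \emph{bijection} $\mathcal{J}\colon\Gamma^*\to R(K)$ under hypothesis (i) alone, and then use injectivity of $\psi=\mathcal{J}^{-1}$ to conclude $\Omega=R(K)$. This is precisely what is unavailable without DCT. The correct parameter space for $R(K)$ (under (i)) is the divisor space $D(K)$, and the Abel map $\pi\colon D(K)\to\Gamma^*$ is always continuous and surjective for Parreau--Widom $K$, while DCT is \emph{equivalent} to $\pi$ being injective (hence a homeomorphism). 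Your $\psi$ is $\pi$ composed with the identification $R(K)\cong D(K)$, so without DCT it need not be injective, and the step ``injectivity forces $\Omega=R(K)$'' fails. The actual mechanism in \cite{volberg} is that almost periodicity of some $J_0$ makes its hull $\Omega$ a compact abelian group on which the shift is translation; the continuous, shift-intertwining map $\Omega\to\Gamma^*$ is then a surjective group homomorphism onto a group carrying a minimal translation, and from this one manufactures a continuous right inverse to $\pi$, forcing $\pi$ to be a homeomorphism and hence DCT to hold. Your ``fact (b)'' is morally this statement, but you need $D(K)$ rather than $\Gamma^*$ as the domain of $\mathcal{J}$ to formulate it correctly, and you should not presuppose that $\psi$ is a bijection.
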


Examples of Parreau-Widom sets satisfying the hypothesis of Theorem \ref{volyud} can be found in Section 1.3 of \cite{volberg}. The following result is an immediate corollary of Theorem \ref{volyud}.

\begin{theorem}\label{IT}
	Let $K$ be a Parreau-Widom set satisfying all assumptions of Theorem \ref{volyud}. Then there is no $J$ with $\sigma(J)=K$ such that it is almost periodic and $J^+$ is regular.
\end{theorem}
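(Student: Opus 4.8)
The plan is a proof by contradiction that manufactures, inside the hull of the hypothetical operator, an almost periodic reflectionless Jacobi operator with spectrum exactly $K$, in direct conflict with Theorem \ref{volyud}. So suppose there were a Jacobi operator $J=J(a_n,b_n)_{n=-\infty}^\infty$ with $\sigma(J)=K$ that is almost periodic and has $J^+$ regular. Since a Parreau--Widom set is non-polar, $\mathrm{Cap}(\sigma(J))=\mathrm{Cap}(K)>0$, so Proposition \ref{prop} applies: it gives $A(\Omega_{c_J})>-\infty$, it gives $\gamma=g_{\sigma(J)}=g_K$ on $\mathbb{C}^+$ and hence $\gamma=0$ quasi-everywhere on $K$, it identifies the common DOS measure of the hull as $\tilde{\nu}=\mu_{\sigma(J)}=\mu_K$, and it tells us that $(J')^+$ is regular for $m_{\Omega_{c_J}}$-almost every $c_{J'}$.

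Next I would extract a single good element of the hull. Because $A(\Omega_{c_J})>-\infty$, part \eqref{it3} of Theorem \ref{bigtheo} shows that for $m_{\Omega_{c_J}}$-almost every $c_{J'}=(a'_n,b'_n)_{n=-\infty}^\infty$ one has $a'_n>0$ for every $n\in\mathbb{Z}$; for any such $J'$, part \eqref{it4} of Theorem \ref{bigtheo} gives $\sigma(J')=\mathrm{supp}(\tilde{\nu})=\mathrm{supp}(\mu_K)=K$, the last equality because $K$ is regular with respect to the Dirichlet problem. On the other hand, since $J$ is almost periodic with $A(\Omega_{c_J})>-\infty$ and $\gamma=0$ quasi-everywhere on $\sigma(J)=K$, the facts from Kotani theory stated in the paragraph preceding Theorem \ref{dct} (from \cite{kotani}, \cite{anand}) give that $J'$ is reflectionless on $K$ for $m_{\Omega_{c_J}}$-almost every $c_{J'}$. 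As $m_{\Omega_{c_J}}$ is a probability measure, I may intersect these full-measure sets and fix one $c_{J'}\in\Omega_{c_J}$ for which $\sigma(J')=K$ and $J'$ is reflectionless on $K$; and $J'$ is automatically almost periodic, since $\Omega_{c_{J'}}=\Omega_{c_J}$ is precompact by part \eqref{it1} of Theorem \ref{bigtheo}. This $J'$ is precisely the sort of operator excluded by Theorem \ref{volyud}, so no such $J$ can exist.

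I expect the only delicate point to be the bookkeeping of ``almost everywhere'': reflectionlessness of $J$ itself is not directly on offer, only reflectionlessness of $m_{\Omega_{c_J}}$-almost every member of its hull, so the argument must be run at the level of the hull and must simultaneously secure positivity of all the $a'_n$ (needed both to make $\sigma(J')$ meaningful and to invoke \eqref{it4}) and reflectionlessness on one and the same full-measure set --- which is exactly why Proposition \ref{prop} together with parts \eqref{it1}, \eqref{it3} and \eqref{it4} of Theorem \ref{bigtheo} are all required. It is also worth flagging that Theorem \ref{dct} is not available as a shortcut: DCT is assumed to fail on $\overline{\mathbb{C}}\setminus K$, so the equivalence there does not apply, and the conclusion must instead be routed through Theorem \ref{volyud}.
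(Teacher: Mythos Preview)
Your proof is correct and follows essentially the same route as the paper's own argument: assume such a $J$ exists, invoke Proposition \ref{prop} to get $A(\Omega_{c_J})>-\infty$ and $\gamma=0$ quasi-everywhere on $K$, use Kotani theory to obtain reflectionlessness for $m_{\Omega_{c_J}}$-almost every hull element, and then contradict Theorem \ref{volyud} via the almost periodicity of hull elements. If anything, your version is slightly more careful than the paper's in explicitly invoking part \eqref{it3} of Theorem \ref{bigtheo} to secure $a'_n>0$ before applying part \eqref{it4}, and in intersecting the two full-measure sets to produce a single witness $J'$.
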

\begin{proof}
	Assume to the contrary that such a $J$ exists. Then $\gamma(z)=0$ almost everywhere (quasi-everywhere implies almost everywhere) on $K$ by Proposition \ref{prop}. Then by Kotani's theory, this implies that for $\mu_{\Omega_{c_J}}$ almost every $c_{J^\prime}$, $J^\prime$ is reflectionless on $\sigma (J)$. By part \eqref{it4} of Theorem \ref{bigtheo}, $\sigma(J)= \sigma(J^\prime)$ holds for each $J^\prime$ associated with an element in $\Omega_{c_J}$. This is impossible by Theorem \ref{volyud} since all Jacobi operators associated with an element in $\Omega_{c_J}$ are almost periodic.  
\end{proof}
Comparing how different Jacobi operators may behave depending on DCT, the sets that satisfy the assumptions in Theorem \ref{volyud} can be considered as the representative of the bad sets in terms of spectral theory of orthogonal polynomials.
\section{Szeg\H{o} Class}

A measure $\mu$ can be written as $$d\mu(x)=f(x)dx+d\mu_{\mathrm{s}}(x)$$ by Lebesgue's decomposition theorem where f is the absolutely continuous part and $d\mu_{\mathrm{s}}$ denotes the singular part with respect to the Lebesgue measure. If $\mathrm{Cap(supp(\mu))}>0$, then $\|p_{n}(\cdot;\mu)\|_{L^2(\mu)}/\mathrm{Cap(supp(\mu))}^n$ is well defined and we denote this ratio by $W_n(\mu)$. 

Following \cite{christi}, let us define the Szeg\H{o} class of measures on a given Parreau-Widom set $K$. By $\mathrm{ess\,supp}(\cdot)$ we denote the set of accumulation points of the support. A measure $\mu$ is in the Szeg\H{o} class of $K$ if 

\begin{enumerate}[(i)]
	\item $\mathrm{ess\,supp}(\mu)=K.$
	\item $\int_K \log{f(x)}\,d\mu_K(x)>-\infty.$
	\item the isolated points $\{x_n\}$ of $\mathrm{supp}(\mu)$
	satisfy $\sum_k g_K(x_n)<\infty.$
\end{enumerate}

By Theorem 2 in \cite{christiansen} and its proof, (ii) can be replaced by one of the following conditions on the recurrence coefficients associated with $\mu$ so that the definition includes the same family of measures:
\begin{enumerate}[(ii$^\prime$)]
	\item $\limsup_{n\rightarrow\infty}W_n(\mu)>0.$
	\end{enumerate}
\begin{enumerate}[(ii$^{\prime\prime}$)]
	\item $\liminf_{n\rightarrow\infty}W_n(\mu)>0.$
	\end{enumerate}
We denote the Szeg\H{o} class of $K$ by $\mathrm{Sz}(K)$. By the above definition, in particular by (ii$^\prime$), $\mu\in\mathrm{Sz}(K)$ implies that $\mu$ is regular in the sense of Stahl-Totik. 

A Jacobi operator $J^+(a_n,b_n)_{n=1}^\infty$ is called asymptotically almost periodic if there is an almost periodic Jacobi operator $J (a_n^\prime,b_n^\prime)_{n=-\infty}^\infty$ such that $\limsup_{n\rightarrow\infty}(|a_n-a_n^\prime|+ |b_n-b_n^\prime|)=0.$ In this case, we call  $J (a_n^\prime,b_n^\prime)_{n=-\infty}^\infty$ the almost periodic limit.

Let $K$ be a Parreau-Widom set such that DCT holds on $\overline{\mathbb{C}}\setminus K$. If $\mu\in\mathrm{Sz}(K)$ then by Theorem 1.2 in \cite{christi} $J^+(\mu)$ is asymptotically almost periodic. By Theorem 2.1 in \cite{christi}, if $J\in\mathrm{IT}(K)$ then the spectral measure of $J^+$ belongs to the Szeg\H{o} class of $K$. 

For the Szeg\H{o} class of Parreau-Widom set, we have a result similar to Theorem \ref{IT}.

\begin{theorem}\label{szeg}
	Let $K$ be a Parreau-Widom set satisfying all assumptions of Theorem \ref{volyud}. If $\mu\in\mathrm{Sz}(K)$ then $J^+(\mu)$ cannot be asymptotically almost periodic. 
\end{theorem}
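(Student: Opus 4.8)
The plan is to argue by contradiction. Assume $\mu\in\mathrm{Sz}(K)$ and that $J^+(\mu)=J^+(a_n,b_n)_{n=1}^\infty$ is asymptotically almost periodic, with almost periodic limit $J'=J(a_n',b_n')_{n=-\infty}^\infty$, so that $|a_n-a_n'|+|b_n-b_n'|\to 0$. I would then show that $J'$ is an almost periodic operator which is reflectionless on $K$ and satisfies $\sigma(J')=K$; since $K$ satisfies all the hypotheses of Theorem \ref{volyud}, no such operator exists, which is the desired contradiction. (Equivalently, once $(J')^+$ is known to be regular one may instead invoke Theorem \ref{IT}, which is why this parallels that result.)

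First I would extract the absolutely continuous spectrum from the Szeg\H{o} condition. Condition (ii) in the definition of $\mathrm{Sz}(K)$, namely $\int_K\log f\,d\mu_K>-\infty$, forces $f>0$ $\mu_K$-almost everywhere on $K$; since $K$ is a Parreau-Widom set, $d\mu_K$ and $dx_{\restriction K}$ are mutually absolutely continuous, so $f>0$ Lebesgue-almost everywhere on $K$. Hence the absolutely continuous part of $\mu$ has essential support equal to $K$ up to a Lebesgue-null set. This is precisely the place where condition (ii) of $\mathrm{Sz}(K)$, rather than mere membership in $\mathbf{Reg}$, is used.

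Next I would realize $J'$ as a right limit of $J^+(\mu)$: by part \eqref{it1} of Theorem \ref{bigtheo} the hull $\Omega_{c_{J'}}$ is minimal, so the set of $n$ for which $d_l(S^n(c_{J'}),c_{J'})$ is small is relatively dense in $\mathbb{Z}$, hence unbounded above, and choosing $n_k\to\infty$ along such returns and using $|a_n-a_n'|+|b_n-b_n'|\to 0$ gives $a_{n_k+j}\to a_j'$, $b_{n_k+j}\to b_j'$ for every $j\in\mathbb{Z}$, i.e. $J'$ is a right limit of $J^+(\mu)$. By Remling's theorem (Theorem 1.1 in \cite{christi}, Theorem 1.4 in \cite{remling}), which is applicable here since \eqref{ess} holds on Parreau-Widom sets, every right limit of $J^+(\mu)$ is reflectionless on the essential support of the absolutely continuous part of $\mu$, that is, on $K$; in particular $J'$ is reflectionless on $K$. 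For the spectrum, $J^+(\mu)-(J')^+$ is compact, so $\sigma_{\mathrm{ess}}((J')^+)=\sigma_{\mathrm{ess}}(J^+(\mu))=\mathrm{ess\,supp}(\mu)=K$; since $\sigma_{\mathrm{ess}}((J')^+)\subseteq\sigma(J')$ (Lemma 3.7 in \cite{teschl}) while $\sigma(J')\subseteq\sigma_{\mathrm{ess}}(J^+(\mu))=K$ by the standard property of right limits, we conclude $\sigma(J')=K$. Thus $J'$ is almost periodic, reflectionless on $K$, and has spectrum $K$, contradicting Theorem \ref{volyud}.

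The main obstacle, exactly as in Theorem \ref{dct}, is to make sure the hypotheses of Remling's theorem are met and that $J'$ genuinely occurs as a right limit of the one-sided operator $J^+(\mu)$: the former is handled by \eqref{ess}, the latter by the minimality of the hull in part \eqref{it1} of Theorem \ref{bigtheo}. A secondary point to record is that once $J'$ is reflectionless on $K$ and $|K|>0$ (automatic for Parreau-Widom sets), the argument recalled before Theorem \ref{dct}, via part \eqref{it2} of Theorem \ref{bigtheo} and \cite{dombrowski}, forces $\inf_{n\in\mathbb{Z}}a_n'>0$; so $J'$ is a genuine Jacobi operator and Theorem \ref{volyud} applies to it verbatim, and, together with $\lim_n(a_1\cdots a_n)^{1/n}=\mathrm{Cap}(K)$ coming from $\mu\in\mathbf{Reg}$, this also shows $(J')^+$ is regular, so the route through Proposition \ref{prop} and Theorem \ref{IT} is equally available.
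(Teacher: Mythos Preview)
Your proof is correct but follows a genuinely different path from the paper's argument. The paper never invokes Remling's theorem directly: instead it first uses the Szeg\H{o} condition and \cite{dombrowski} to get $\inf_n a_n>0$, transfers this to $\inf_n a_n'>0$ via part \eqref{it2} of Theorem \ref{bigtheo}, and then shows \emph{regularity} of $(J')^+$ by an elementary Ces\`aro-mean computation: since $\mu\in\mathbf{Reg}$ and $\lim_n(a_1'\cdots a_n')^{1/n}$ exists (by Theorem 7.1(f) in \cite{simon1}, once $\inf a_n'>0$), the ratio $r_n=a_n/a_n'\to 1$ forces the two geometric means to coincide. The contradiction is then obtained from Theorem \ref{IT} rather than from Theorem \ref{volyud} directly.

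Your route trades the Ces\`aro step for Remling's oracle theorem: you identify $J'$ as a right limit (via minimality of the hull), pull reflectionlessness on $K$ out of the Szeg\H{o} condition $f>0$ a.e., and then hit Theorem \ref{volyud} head-on. This is arguably more conceptual and explains \emph{why} asymptotic almost periodicity must fail (it would produce a reflectionless almost periodic operator), whereas the paper's argument is more self-contained, staying at the level of growth rates of products of recurrence coefficients and appealing only to the machinery already built in Section 2. One small ordering remark: it is cleaner to secure $\inf_n a_n>0$ from \cite{dombrowski} \emph{before} invoking Remling's theorem or speaking of $J'$ as a right limit, rather than deducing $\inf_n a_n'>0$ only after reflectionlessness; the paper does this first and it avoids any worry about degenerate right limits. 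Your final sentence about regularity of $(J')^+$ is the one place where you are a touch quick---that step genuinely requires the Ces\`aro argument the paper gives, not just $\inf a_n'>0$ plus regularity of $\mu$---but since your main line of argument does not rely on it, this does not affect the validity of your proof.
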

\begin{proof}
	Assume that there is a measure $\mu\in \mathrm{Sz}(K)$ such that $J^+(\mu)=J^+(a_n,b_n)_{n=1}^\infty$ is asymptotically almost periodic   with the almost periodic limit $J^\prime(a_n^\prime,b_n^\prime)_{n=-\infty}^\infty$.
	
	 Since $\mu$ has a non-trivial absolutely continuous part, by \cite{dombrowski} there is a $d>0$ such that $\inf_{n\in\mathbb{N}}{a_n} =d$. This implies that $\liminf_{n\rightarrow\infty}a_n^\prime \geq d$. By part \eqref{it2} of Theorem \ref{bigtheo}, we also have $inf_{n\in\mathbb{Z}}{a_n^\prime}\geq d>0$. Thus $(J^\prime)^+$ is well defined. Moreover, by Theorem 7.1 (f), there is a positive number $A$ such that 
	 
	 \begin{equation}A= \label{eq1}
	 \lim_{n\rightarrow\infty}(a_1^\prime\cdots a_n^\prime)^{1/n}.
	 \end{equation}
	 
	 By Lemma 3.9 in \cite{teschl}, $J^+(\mu)$ and $(J^\prime)^+$ have the same essential spectra which implies that $\sigma(J^\prime)=K$ by part \eqref{it4} of Theorem \ref{bigtheo}. Now, let us show that $(J^\prime)^+$ is regular which contradicts with Theorem \ref{IT}. 
	 
	 By regularity of $\mu$ and \eqref{eq1}, there is a $C>0$ such that 
	 \begin{equation}\label{eq2}
	  C=\lim_{n\rightarrow\infty}\left( \frac{a_1\cdots a_n}{a_1^\prime\cdots a_n^\prime}\right)^{1/n}.
	 \end{equation}
	 If we let $r_n:=a_n/a_n^\prime$ for $n\in\mathbb{N}$ in \eqref{eq2} and taking logarithm of both sides then we obtain 
	 \begin{equation*}
	   \log{C}=\lim_{k\rightarrow\infty} \frac{1}{k}\sum_{n=1}^k \log{r_n}.
	 \end{equation*}
	 But since $\log{r_n}\rightarrow 0$ as $n\rightarrow\infty$ and the partial sums $\frac{1}{k}\sum_{n=1}^k \log{r_n}$, for k=1,\dots, are Cesaro means of $(\log{r_n})$ we have $\log{C}=0$ and thus $C=1$. Since $J^+(\mu)$ and $(J^\prime)^+$ have the same essential spectra, $C=1$ implies the regularity of the latter.
		
\end{proof}
We should mention that for all Parreau-Widom sets $K$, $\mu_K\in\mathrm{Sz}(K)$. The part (i) of the definition of the Szeg\H{o} class above holds for $\mu_K$ as mentioned in Section 2. The proof of part (iii) is straightforward for $\mu_K$ can not contain point mass since this would imply that $\mathrm{Cap}(K)=0$. For the proof of (ii) for $\mu_K$, see e.g. Section 4 in \cite{christiansen}. Therefore, by Theorem \ref{szeg}, there are some Parreau-Widom sets $K$ for which $J^+(\mu_K)$ is not asymptotically almost periodic.

\section{The Szeg\H{o} class and the isospectral torus of a generic set}

Before surveying the known examples for zero measure case, we first give definitions of Szeg\H{o} class and isospectral torus on generic sets (we discuss below what we mean by a generic set). For previous suggestions for the definition of isospectral torus, we refer the reader to \cite{kruger}, \cite{mant3}. Our approach will be very similar to that of \cite{kruger}. 

Our definitions should be compatible with the definitions in Parreau-Widom case. For this, we impose the conditions of non-polarity and regularity with respect to the Dirichlet problem to the sets we consider. The sets satisfying these mild assumptions are our generic sets. For the Szeg\H{o} class we suggest the following definition.

\begin{definition}
	Let $K$ be a non-polar compact subset of $\mathbb{R}$ which is regular with respect to the Dirichlet problem. Then $\mu\in\mathrm{Sz}(K)$ if
	
	\begin{enumerate}[(i)]
		\item $\mathrm{ess\,supp}(\mu)=K.$
		\item $\inf_{n\in\mathbb{N}}W_n(\mu)>0.$
		\item the isolated points $\{x_n\}$ of $\mathrm{supp}(\mu)$
		satisfy $\sum_k g_K(x_n)<\infty.$
	\end{enumerate}	
\end{definition}

We remark that, if $K$ is taken to be equal to a Parreau-Widom set then this definition coincides with the definition given in Section 3. We choose the property $\inf_{n\in\mathbb{N}} W_n(\mu)>0$ instead of $\sup_{n\in\mathbb{N}} W_n(\mu)>0$ in order to have a smaller class of measures. 

In \cite{alpeq}, the following result was proven:

\begin{theorem}\label{eqin}
	Let $K$ be a non-polar subset of $\mathbb{R}$. Then $\inf_{n\in\mathbb{N}}W_n(\mu_K)\geq 1$.
\end{theorem}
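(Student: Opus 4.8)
The plan is to express $W_n(\mu_K)$ in terms of the leading coefficients of the orthonormal polynomials and the capacity of $K$, and then to compare $\|p_n(\cdot;\mu_K)\|_{L^2(\mu_K)}$ against the sup-norm of a cleverly chosen comparison polynomial — namely, a suitably normalized Chebyshev-like polynomial whose $L^\infty(K)$ norm is controlled by $\mathrm{Cap}(K)^n$ via the equilibrium potential. Recall that $W_n(\mu) = \|p_n(\cdot;\mu)\|_{L^2(\mu)}/\mathrm{Cap}(K)^n = (a_1\cdots a_n)/\mathrm{Cap}(K)^n$, so the assertion $W_n(\mu_K)\ge 1$ is equivalent to $a_1\cdots a_n \ge \mathrm{Cap}(K)^n$, i.e. $\|p_n(\cdot;\mu_K)\|_{L^2(\mu_K)}\ge \mathrm{Cap}(K)^n$ for every $n$.

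First I would invoke the extremal property of monic orthogonal polynomials in $L^2(\mu_K)$: for every monic polynomial $q$ of degree $n$ one has $\|p_n(\cdot;\mu_K)\|_{L^2(\mu_K)}\le \|q\|_{L^2(\mu_K)}$. Hence to get a \emph{lower} bound on the left-hand side I cannot use this directly; instead I would use that $\mu_K$ is a probability measure, so $\|p_n(\cdot;\mu_K)\|_{L^2(\mu_K)}\le \|p_n(\cdot;\mu_K)\|_{L^\infty(\mathrm{supp}\,\mu_K)} = \|p_n(\cdot;\mu_K)\|_{L^\infty(K)}$, combined with the key potential-theoretic fact that \emph{any} monic polynomial $q$ of degree $n$ satisfies $\|q\|_{L^\infty(K)}\ge \mathrm{Cap}(K)^n$ (this is the classical lower bound for Chebyshev numbers, a consequence of the maximum principle applied to $\frac{1}{n}\log|q(z)| - g_K(z) - \log\mathrm{Cap}(K)$, which is subharmonic off $K$, bounded, and $\to 0$ at infinity). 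Wait — that gives an inequality in the wrong direction too. The correct route is: $\|p_n\|_{L^2(\mu_K)}^2 = \int |p_n|^2\,d\mu_K$, and I would bound this \emph{below} using the Bernstein–Walsh / equilibrium-measure machinery. Concretely, by the reproducing structure, $\int |p_n(x;\mu_K)|^2 d\mu_K(x)$ equals the square of the $L^2(\mu_K)$-distance from $x^n$ to polynomials of degree $<n$; I would then show this distance is at least $\mathrm{Cap}(K)^n$ by testing against the linear functional "leading coefficient," whose norm on $L^2(\mu_K)$-closure of polynomials is governed by $\mathrm{Cap}(K)$ precisely because $\delta_n\to\mu_K$ and the asymptotic leading coefficient is $\mathrm{Cap}(K)^{-n}$.

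The cleanest argument, and the one I would actually write, uses the zero-counting measure. The monic polynomial $p_n(x;\mu_K) = \prod_{j}(x-z_{n,j})$ has all zeros in the convex hull of $K$, and $\delta_n = \frac1n\sum_j \delta_{z_{n,j}}$. Then $\log\|p_n(\cdot;\mu_K)\|_{L^2(\mu_K)}\ge \int \log|p_n(x;\mu_K)|\,d\mu_K(x) = \sum_j \int\log|x-z_{n,j}|\,d\mu_K(x)$ by Jensen's inequality (concavity of $\log$, $\mu_K$ a probability measure). Now for each fixed $w$ in the convex hull of $K$, $\int\log|x-w|\,d\mu_K(x) = -I(\mu_K) + \text{(potential term)} \ge \log\mathrm{Cap}(K)$, because the equilibrium potential satisfies $U^{\mu_K}(w) = -\int\log|x-w|\,d\mu_K(x) \le -\log\mathrm{Cap}(K)$ everywhere on $\mathbb{C}$ (Frostman: $U^{\mu_K}\le \log(1/\mathrm{Cap}(K))$ on all of $\mathbb{C}$, with equality q.e. on $K$). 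Summing over the $n$ zeros gives $\log\|p_n(\cdot;\mu_K)\|_{L^2(\mu_K)}\ge n\log\mathrm{Cap}(K)$, i.e. $W_n(\mu_K)\ge 1$, for every $n\in\mathbb{N}$.

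The main obstacle — really the only subtle point — is justifying the two-sided control of the equilibrium potential: the Frostman inequality $U^{\mu_K}(z)\le \log(1/\mathrm{Cap}(K))$ for \emph{all} $z\in\mathbb{C}$ (not merely q.e. on $K$), which holds because $U^{\mu_K}$ is superharmonic on $\mathbb{C}$, harmonic off $K$, tends to $-\infty\cdot$(no) — rather, $U^{\mu_K}(z) = \log(1/|z|) + o(1)$ at infinity and attains its maximum on $K$ by the minimum principle for superharmonic functions. One must be slightly careful that the zeros $z_{n,j}$ genuinely lie in a region where this bound applies; since they lie in $[\min K,\max K]\subset\mathbb{R}$ and the bound is global, there is no issue. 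I would also remark that equality $W_n(\mu_K)=1$ is not claimed and generally fails; the Jensen step is typically strict. I expect the write-up to be short, citing Frostman's theorem (e.g. Theorem 3.3.4 in \cite{Ransford}) for the potential bound and using only Jensen's inequality beyond that.
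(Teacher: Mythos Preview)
The present paper does not prove this statement; it is quoted from \cite{alpeq}. Your third-paragraph argument is correct and complete: Jensen's inequality gives
\[
\log\|p_n(\cdot;\mu_K)\|_{L^2(\mu_K)}\ \ge\ \int\log|p_n(x;\mu_K)|\,d\mu_K(x)\ =\ -\sum_j U^{\mu_K}(z_{n,j}),
\]
and the global Frostman bound $U^{\mu_K}\le I(\mu_K)=-\log\mathrm{Cap}(K)$ on all of $\mathbb{C}$ then yields $\|p_n(\cdot;\mu_K)\|_{L^2(\mu_K)}\ge\mathrm{Cap}(K)^n$, i.e.\ $W_n(\mu_K)\ge 1$. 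This is precisely the proof given in \cite{alpeq}, so once you discard the exploratory first two paragraphs (which, as you yourself note mid-stream, point in the wrong direction), your write-up matches the source.

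Two cosmetic remarks. First, since the Frostman inequality holds on all of $\mathbb{C}$, you need not locate the zeros in the convex hull of $K$; that observation is superfluous. Second, the finiteness of $\int\log|p_n|\,d\mu_K$ (needed for Jensen) is immediate because each $U^{\mu_K}(z_{n,j})$ is a finite real number, the potential of a compactly supported probability measure being everywhere $>-\infty$ and bounded above by $I(\mu_K)$.
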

As a corollary we have,

\begin{corollary}\label{eqcor}
	Let $K$ be a non-polar subset of $\mathbb{R}$ which is regular with respect to the Dirichlet problem. Then $\mu_K\in\mathrm{Sz}(K)$.
\end{corollary}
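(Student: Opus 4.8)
The plan is to verify, one by one, the three defining conditions of $\mathrm{Sz}(K)$ for the measure $\mu=\mu_K$, with the bulk of the work already supplied by Theorem \ref{eqin}. First I would record that $W_n(\mu_K)$ is even meaningful: $K$ is non-polar, so $\mathrm{Cap}(K)>0$, and regularity of $K$ with respect to the Dirichlet problem gives $\mathrm{supp}(\mu_K)=K$ (Theorem 4.2.3 in \cite{Ransford} together with Theorem 5.5.13 in \cite{Sim3}, as already recalled in Section 2); hence $\|p_n(\cdot;\mu_K)\|_{L^2(\mu_K)}/\mathrm{Cap}(K)^n$ is well defined for every $n$. Condition (ii), namely $\inf_{n\in\mathbb{N}}W_n(\mu_K)>0$, is then immediate from Theorem \ref{eqin}, which in fact yields the sharper bound $\inf_{n}W_n(\mu_K)\ge 1$.

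Next I would handle conditions (i) and (iii) simultaneously by showing that $K$ has no isolated points. If $x$ were an isolated point of $K$, then $\{x\}$ would be a connected component of $K$; being a single point it is polar, hence $x$ is an irregular boundary point of $\overline{\mathbb{C}}\setminus K$, contradicting the standing assumption that $K$ is regular with respect to the Dirichlet problem. (Equivalently, as observed after Theorem \ref{szeg}, $\mu_K$ can carry no point mass, since an atom at an isolated point would force $\mathrm{Cap}(K)=0$.) Consequently $\mathrm{supp}(\mu_K)=K$ has no isolated points, so every point of $K$ is an accumulation point of $\mathrm{supp}(\mu_K)$; this gives $\mathrm{ess\,supp}(\mu_K)=K$, which is (i). Moreover the set $\{x_n\}$ of isolated points of $\mathrm{supp}(\mu_K)$ in (iii) is empty, so $\sum_n g_K(x_n)=0<\infty$ holds vacuously. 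With (i), (ii), (iii) all checked, we conclude $\mu_K\in\mathrm{Sz}(K)$.

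I do not expect a genuine obstacle here once Theorem \ref{eqin} is available: the only point demanding a moment's care is condition (i), i.e. the absence of isolated points of $K$, and this is a direct consequence of the assumed Dirichlet-regularity of $K$ (an isolated point is polar, hence irregular).
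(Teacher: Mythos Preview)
Your proof is correct and follows essentially the same route as the paper: both use $\mathrm{supp}(\mu_K)=K$ (via Ransford/Simon), invoke Theorem \ref{eqin} for condition (ii), and dispose of (i) and (iii) by observing that $\mathrm{supp}(\mu_K)$ has no isolated points. You supply a justification for that last point (an isolated point would be polar, hence irregular) where the paper simply asserts it, but the overall argument is the same.
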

 \begin{proof}
 	Since $K$ is regular, combining Theorem 4.2.3 in \cite{Ransford} and Theorem 5.5.13 in \cite{Sim3} we obtain $\mathrm{supp}(\mu_K)= K$. The condition $\inf_{n\in\mathbb{N}}W_n(\mu)>0$ holds by Theorem \ref{eqin}. The condition concerning isolated points automatically holds since $\mu_K$ does not have isolated points.
 \end{proof}
This result implies that the Szeg\H{o} class of a generic set is always non-empty. 

Next, let us discuss the isospectral torus.

\begin{definition}
	Let $K$ be a non-polar compact subset of $\mathbb{R}$ which is regular with respect to the Dirichlet problem. Then a two-sided Jacobi operator  $J$ is in $\mathrm{IT}(K)$ if
	\begin{enumerate}[(i)]
		\item $\sigma_{\mathrm{ess}}(J)=K.$
		\item $J$ is almost periodic.
		\item There is a $c_{J^\prime}\in \Omega_{c_J}$ such that $(J^\prime)^+$ is regular in the sense of Stahl-Totik.
	\end{enumerate}	
\end{definition}
 This definition is almost identical with the definition suggested in p. 83 of \cite{kruger}. Our definition allows that $a_n=0$ for some $n$. Thus, there may be some $c_{\tilde{J}}\in\Omega_{c_J}$ such that $\left(\tilde{J}\right)^+$ is not regular. Nevertheless by Proposition \ref{prop} for $m_{\Omega_{c_J}}$ almost every $c_{J^\prime}$, we have that $J^\prime$ is two-sided regular with $a_n \neq 0$ for all $n\in\mathbb{N}$. Here, the emphasis is on the hull rather than each element of the hull. This will allow us to cover some interesting examples (see Section 5) of operators in the definition of the isospectral torus which
 are not covered by the definition in \cite{kruger}. 
 
 We remark that this definition coincides with the definition discussed in Section 2, as easy to check, if $K$ is taken to be a finite gap set. Moreover, $\mathrm{IT}(K)=\emptyset$ by Theorem \ref{IT} if $K$ satisfies the assumptions of Theorem \ref{volyud}.
 
 There are some interesting examples of almost periodic Jacobi operators discussed in the literature many times, see e.g. \cite{avil}. In our discussion of the concepts of Szeg\H{o} class and isospectral torus, we fix the set first instead of choosing an operator. Hence, we disregard these kind of examples. We focus on three cases in the next section: Cantor ternary set, polynomial Julia sets and generalized polynomial Julia sets. 
 
 \section{Three examples}
 \subsection{Cantor ternary set}
 For a general treatment of iterated function systems, we refer the reader to \cite{barnben}, \cite{hut}. 
 
 Let $w_1(x)= x/3$ and $w_2(x)= (x+2)/3.$ Then the Cantor ternary set $K_0$ is the unique solution of $K=\cup_{j\in{1,2}} w_j(K)$ among the subsets of $[-1,1]$. The Cantor-Lebesgue measure $\eta_{K_0}$ is the unique unit Borel measure satisfying 
 \begin{equation*}
 \int_{K_0} f\, d\mu= \frac{1}{2}\sum_{j=1}^{2} \int_{K_0} (f\circ w_j)\, d\mu
 \end{equation*}
 for all $f\in C(K_0)$. We have $\mathrm{supp}(\eta_{K_0})= K_0$ and $|K_0|=0$. It is easy to verify that $\eta_{K_0}$ is purely singular continuous.
 
 In \cite{kruger} scaled and translated versions of $K_0$ and $\eta_{K_0}$ were under investigation. We do not make any distinctions below for the usual Cantor set and scaled and translated versions since these results are invariant under such operations. 
 
 Some results concerning the moments $\int t^n\, d{\eta_{K_0}}(t)$ can be found in \cite{barnben}. A numerically stable algorithm for calculating the recurrence coefficients for $\eta_{K_0}$ was given in \cite{mant1} by Mantica. In \cite{Heilman} some properties  regarding orthogonal polynomials associated with $\eta_{K_0}$ were tested numerically. It was conjectured in \cite{mant2} that these coefficients are asymptotically almost periodic. This conjecture was repeated in \cite{kruger} (Conjecture 3.1) by checking the behavior of the first $100000$ coefficients.
 
 It is due to Bia\l as-Cie\.{z} and Volberg that the Cantor ternary set is regular with respect to the Dirichlet problem, see \cite{bial}. Regularity of $\eta_{K_0}$ in the sense of Stahl-Totik was proven in \cite{kruger}. If the conjecture on the recurrence coefficients regarding almost periodicity is correct then there is an almost periodic Jacobi operator $J$ (which is the almost periodic limit of $J^+(\eta_{K_0})$) such that $J^+$ is regular and $\sigma_{\mathrm{ess}}(J)= K_0$. Thus, we have $\mathrm{IT}(K_0)\neq \emptyset$.
 
 In \cite{kruger}, the behavior of $(W_n(\mu))_n$ was numerically examined. It was conjectured that (Conjecture 3.2) 
 \begin{equation}\label{widcon}
 0<\inf_{n\in\mathbb{N}} W_n(\eta_{K_0})\leq \sup_{n\in\mathbb{N}} W_n(\eta_{K_0})<\infty.
 \end{equation}
 Hence $\eta_{K_0}\in \mathrm{Sz}(K_0)$ provided that \eqref{widcon} holds.  
 
 By \cite{kruger}, for $\beta>0$, the spectral measure for $J^+(\eta_{K_0})+\beta \langle \delta_1,\cdot\rangle \delta_1$ is purely discrete. This shows that, unlike  the absolutely continuous part (see e.g. Section 7 in \cite{Sim3}), the singular part of the spectral measure of a Jacobi operator is not preserved under finite rank perturbations.

\subsection{Polynomial Julia sets} For some of the results on the orthogonal polynomials associated with equilibrium measures, see \cite{Barnsley3}, \cite{Barnsley4}, \cite{bes2}. There are some other results when the measure is not the equilibrium measure, see e.g. \cite{besger}, \cite{knill}.

For a given non-linear complex polynomial $f$, the Julia set $H(f)$ can be defined in $\overbar{\mathbb{C}}$ as $\partial \{z\in\overbar{\mathbb{C}}: f^{(n)}(z)\rightarrow\infty \mbox{ locally uniformly in } \overbar{\mathbb{C}}\} $ where $f^{(n)}$ is the $n$-th iteration of $f$. Note that $H(f)$ is a non-polar compact subset of $\mathbb{C}$. Regularity of $H(f)$ with respect to the Dirichlet problem was proved in \cite{Mane}.

 We want to consider the simplest case that is $f(z)=z^2-c$ with $c>2$. For such an $f$, $H(f)$ is a Cantor set on $\mathbb{R}$ and $|H(f)|=0$, see e.g. \cite{brolin}. The recurrence coefficients for $\mu_{H(f)}$ can be calculated recursively by the following formulas, see p. 89 of \cite{bes2}:
\begin{align*}
 a_1 &=\sqrt{c},  n\geq 1 \\
 a_{2n}^2 a_{2n-1}^2 &= a_n^2, \,\,\\
 a_{2n}^2+ a_{2n+1}^2&= c \\
 b_n&=0.
\end{align*}
Let $c\geq 3$. Then $J^+\left(\mu_{H(f)}\right)$ is almost periodic, see p. 92 of \cite{bes2}. Note that $\tilde{\nu}=\mu_{H(f)}$ by regularity of $\mu_{H(f)}$, see e.g. Theorem \ref{eqin}. Since $H(f)$ is regular with respect to the Dirichlet problem, $\mathrm{supp}\left(\mu_{H(f)}\right)=H(f)$. Let us denote the almost periodic extension of this operator to $l^2(\mathbb{Z})$ by $J(\mu_{H(f)})$. In this extension we have $a_0=0$. Some properties of $J^-(\mu_{H(f)})$ were studied in \cite{sodin2}. We have $\sigma(J(\mu_{H(f)}))= H(f)$ by part \eqref{it4} of Theorem \ref{bigtheo}. Since $J^+\left(\mu_{H(f)}\right)$ is regular in the sense of Stahl-Totik we have $J(\mu_{H(f)})\in \mathrm{IT}(H(f))$. By Corollary \ref{eqcor}, we also have $\mu_{H(f)}\in \mathrm{Sz}(H(f))$. 

Another interesting property of this family of Jacobi operators is that the spectral measure for $J^+_{\restriction_1}\left(\mu_{H(f)}\right)$ is purely discrete, see Section 2 in \cite{Barnsley4}. Thus, the singular continuous part of the spectrum is not preserved under such an operation. We remark that if $\mu$ is a measure with a non-trivial absolutely continuous part, then absolutely continuous spectra of $J^+(\mu)$ and $J^+_{\restriction_1}(\mu)$ coincide, see Chapter 7 in \cite{Sim3}.

\subsection{Generalized polynomial Julia sets}
For background information about generalized Julia sets, we refer the reader to \cite{Bruck} and \cite{Fornaess}. In \cite{alpgon2}, the authors restrict their attention to Julia sets from Section 4 of \cite{Bruck}. 

\begin{definition}Let $f_n(z)=\sum_{j=0}^{d_n}a_{n,j}\cdot z^j$ where $d_n\geq 2$ and $a_{n,d_n}\neq 0$ for all $n\in\mathbb{N}$. We say that $(f_n)$ is a \emph{regular polynomial sequence} if the following properties are satisfied:
	\begin{itemize}
		\item There exists a real number $A_1> 0$ such that $|a_{n,d_n}|\geq A_1$, for all $n\in\mathbb{N}$.
		\item There exists a real number $A_2\geq 0$ such that $|a_{n,j}|\leq A_2 |a_{n,d_n}|$ for $j=0,1,\ldots, d_n-1$ and $n\in\mathbb{N}$.
		\item There exists a real number $A_3$ such that $$\log{|a_{n,d_n}|}\leq A_3\cdot d_n,$$
		for all $n\in\mathbb{N}$. 
	\end{itemize}
\end{definition}

Let $F_l(z):=(f_l\circ\ldots\circ f_1)(z)$. If $(f_n)$ is a regular polynomial sequence then the Julia set $H_{(f_n)}$ associated with $(f_n)$ is defined as $\partial\{z\in\overline{\mathbb{C}}: F_n(z) \mbox{ goes locally uniformly to } \infty \}$. These Julia sets are regular with respect to the Dirichlet problem, see \cite{Bruck}.

Orthogonal polynomials for a special family of generalized Julia sets were studied in detail in \cite{g1}, \cite{alpgon}, \cite{alpgon2}, \cite{alp3}. The construction is from \cite{gonc}. Let $\gamma=(\gamma_s)_{s=1}^\infty$ be a sequence such that $0<c<\gamma_s<1/4$ for some $c$. Define $(f_n)_{n=1}^\infty$ by $f_1(z):=2z(z-1)/\gamma_1+1$ and $f_n(z):=z^2/(2\gamma_n)+1-1/(2\gamma_n)$ for $n>1$. Then, $K(\gamma):= H_{(f_n)}$. For each $\gamma$ we obtain a different set. We remark that $K(\gamma)$ is a non-polar Cantor set lying on $\mathbb{R}$. 

Depending on numerical evidence, it was conjectured (Conjecture 3.3) in \cite{alp3} that $J^+({\mu_{K(\gamma)}})$ is asymptotically almost periodic for all $\gamma$. If this conjecture is correct, then $\mathrm{IT}(K(\gamma))\neq \emptyset$ by a similar argument used in Section 5.1 in order to show $\mathrm{IT}(K_0)\neq \emptyset$. 

\section{Further discussion and some open problems}
    \begin{itemize} 
	\item Theorem \ref{IT} implies the existence of positive measure sets with empty isospectral torus. In Section 5.2, a zero Lebesgue measure set with non-empty isospectral torus is included. Is there a zero measure non-polar compact subset $K$ of $\mathbb{R}$ which is regular respect to the Dirichlet problem satisfying $\mathrm{IT}(K)=\emptyset$? If this is the case, is there a general condition on zero measure sets which is similar to DCT condition?
	
	\item What measures other than the equilibrium measure belong to the Szeg\H{o} class of a generic set? Is there a generic set $K$ with $|K|=0$ such that $J^+(\mu_K)$ is not asymptotically almost periodic? If there is, does it imply that $\mathrm{IT}(K)=\emptyset$?
	
    \item This problem is also mentioned in \cite{alpeq}. What is the value of $\liminf {a_n}$ where $(a_n)_{n=1}^\infty$ is the sequence of recurrence coefficients for $\mu_{K_0}$. If this value is $0$ then by Corollary 1 in \cite{alpeq}, $(W_n(\mu_{K_0}))_{n=1}^\infty$ is unbounded. It may be also true that  $\liminf_{n\rightarrow\infty} {a_n}\neq 0$ but $(W_n(\mu_{K_0}))_{n=1}^\infty$ is unbounded
    
    A couple of algorithms for computing the recurrence coefficients associated with $\mu_{K_0}$ were already discussed in \cite{mant4}. Studying these coefficients, at least numerically, can give some ideas about general behavior of $(W_n(\mu_K))_{n=1}^\infty$ and $J^+({\mu_K})$ for generic zero Lebesgue measure sets. Moreover, a comparison of these results with the results obtained for the Cantor measure in \cite{Heilman}, \cite{kruger}, \cite{mant1} is of particular interest since $\mu_{K_0}$ and $\eta_{K_0}$ are mutually singular by \cite{maka}.

	\end{itemize}


\begin{thebibliography}{155}
	%
	%
	\bibitem{alpeq} Alpan, G.: Orthogonal polynomials associated with equilibrium measures on R, electronically published  in Potential Anal., (2016) dx.doi.org/10.1007/s11118-016-9589-3
	
	\bibitem{g1} Alpan, G: {Spacing properties of the zeros of orthogonal polynomials on Cantor sets via a sequence of polynomial mappings}, Acta Math. Hungar., \textbf{149}, 509--522 (2016)
	
	\bibitem{alpgon}Alpan, G., Goncharov, A.: {Orthogonal polynomials for the weakly equilibrium Cantor sets}, Proc. Amer. Math. Soc., \textbf{144} (9), 3781--3795 (2016)
	
	\bibitem{alpgon2}Alpan, G., Goncharov, A.: {Orthogonal polynomials on generalized Julia sets}, Preprint (2016), arXiv:1503.07098v4
	
	\bibitem{alp3}Alpan, G., Goncharov, A., \c{S}\.{i}m\c{s}ek, A.N.: {Asymptotic properties of Jacobi matrices for a family of fractal measures}, electronically published in Exp. Math., (2016), http://dx.doi.org/10.1080/10586458.2016.1209710
	
	\bibitem{anand} Antony, Anand J., Krishna, M.: {Inverse spectral theory for Jacobi matrices and their almost periodicity}. Proc. Indian Acad. Sci. (Math. Sci.)  \textbf{104}, 777-- 818 (1993)
	
	
	\bibitem{apt}Aptekarev, A.I.: {Asymptotic properties of polynomials orthogonal on a system of contours, and periodic motions of Toda lattices}, Mat. Sb., \textbf{125},  231–258 (1984), English translations in Math. USSR Sb., \textbf{53} , 233–-260 (1986)
	
	\bibitem{avil}Avila, A., Jitomirskaya, S.: {The Ten Martini problem}. Ann. of Math. \textbf{170}, 303--342 (2009)
	
	\bibitem{barnben} Barnsley, M. F.; Demko, S: {Iterated function systems and the global construction of fractals},
	Proc. R. Soc. Lond., Ser. A, \textbf{399}, 243--275 (1985)
	
	\bibitem{Barnsley3}Barnsley M.F., Geronimo, J.S., Harrington, A.N.: {Infinite-Dimensional Jacobi matrices associated with Julia sets}, Proc. Amer. Math. Soc., \textbf{88} (4), 625--630 (1983)
	
	\bibitem{Barnsley4}Barnsley, M.F., Geronimo, J.S., Harrington, A.N.: {Almost periodic Jacobi matrices associated with Julia sets for polynomials}, Comm. Math. Phys., \textbf{99}(3), 303--317 (1985)
	
	\bibitem{beckus} Beckus, S., Pogorzelski, F.:{Spectrum of Lebesgue measure zero for Jacobi matrices of quasicrystals}, Math. Phys. Anal. Geom., \textbf{16}, 289--308 (2013)
	
	
	\bibitem{bes2} Bessis, D: {Orthogonal polynomials Pad\'{e} approximations, and Julia sets}, in: \textit{Orthogonal Polynomials: Theory \& Practice, 294} (P. Nevai ed.),  Kluwer, Dordrecht, 55--97 (1990)
	
	\bibitem{besger} Bessis, D., Geronimo, J.S., Moussa, P.:{Function weighted measures and orthogonal polynomials on Julia sets}, Constr. Approx. \textbf{4}, 157--173 (1988)
	
	
	\bibitem{bial} Bia\l as-Cie\.{z}, L., Volberg, A.: Markov's property of the Cantor ternary set, Stud. Math. \textbf{104}, 259--268 (1993)
	
	\bibitem{brolin}Brolin, H.: {Invariant sets under iteration of rational functions}, Ark. Mat. \textbf{6}(2), 103--144 (1965)
	
	\bibitem{Bruck} Br\"{u}ck, R., B\"{u}ger, M.: {Generalized iteration}, Comput. Methods Funct. Theory \textbf{3}, 201--252 (2003)
	\bibitem{carmona} Carmona, R., Kotani, S.:{Inverse spectral theory for random Jacobi matrices}, J. Stat. Phys., \textbf{46}, 1091--1114 (1987)
	
	\bibitem{christiansen}Christiansen, J.S.: {Szeg\H{o}'s theorem on Parreau-Widom sets}, Adv. Math., \textbf{229}, 1180--1204 (2012)
	
	\bibitem{christi}  Christiansen, J. S.: Dynamics in the Szeg\H{o} class and polynomial asymptotics. accepted for publication in J. Anal. Math.
	
	
	\bibitem{chriss}Christiansen, J.S., Simon, B., Zinchenko, M.: {Finite Gap Jacobi Matrices, I. The Isospectral Torus}. Constr. Approx. \textbf{32}, 1--65 (2009)
	
	\bibitem{Chris}Christiansen, J.S., Simon, B., Zinchenko, M.: {Finite Gap Jacobi Matrices, II. The Szeg\"o Class}. Constr. Approx. \textbf{33}(3), 365--403 (2011)
		
	\bibitem{Chris2}Christiansen,  J. S., Simon,  B., Zinchenko, M.: Finite gap Jacobi matrices: a review, American Mathematical Society, \textit{Proceedings of Symposia in Pure Mathematics}, 87--103 (2013).
	
	
	

	
	\bibitem{dombrowski}Dombrowski, J.:{Quasitriangular matrices}. Proc. Amer. Math. Soc. \textbf{69}, 95--96 (1978)
	
	
	\bibitem{Fornaess}Forn\ae ss, J.E., Sibony, N.: {Random iterations of rational functions}, Ergodic Theory Dyn. Syst. \textbf{11}, 687--708 (1991)
	
	\bibitem{gonc}Goncharov, A.: {Weakly equilibrium Cantor type sets}, Potential Anal. \textbf{40}, 143--161 (2014)
	
	
	\bibitem{Heilman}Heilman, S.M., Owrutsky, P., Strichartz, R.: {Orthogonal polynomials with respect to self-similar measures}. Exp. Math. \textbf{20}, 238-259 (2011)
	
	\bibitem{lenz}Grigorchuk, R., Lenz, D., Nagnibeda, T.: {Schreier graphs of Grigorchuk's group and a subshift associated to a non-primitive substitution} To appear in: Groups, Graphs, and Random Walks. T.Ceccherini-Silberstein, M.Salvatori and E.Sava-Huss Eds, London Mathematical Society Lecture Note Series, Cambridge University Press, Cambridge, (2016), arXiv:1510.00545v4
	
	\bibitem{hur} Hur, I., Remling, C.:{Ergodic Jacobi matrices and conformal maps}, Math. Phys. Anal. Geom., \textbf{15}, 121--162 (2012)
	
	\bibitem{hut} Hutchinson, J.: {Fractals and self similarity}, Indiana Univ. J. Math., \textbf{30}, 713-747 (1981)
	
	\bibitem{knill} Knill, O.:{Renormalization of Random Jacobi Operators},  Comm. Math. Phys., \textbf{164}, 195--215 (1993) 
	
	\bibitem{knill2} Knill, O: {Spectral, ergodic and cohomological problems in dynamical systems}, PhD Thesis,
	ETH Z\"{u}rich, (1993)
	
	\bibitem{kotani} Kotani, S: {Lyapunov indices determine absolutely continuous spectra of stationary
	random one-dimensional Schr\"{o}dinger operators}. In Stochastic Analysis, p. 225--248. North Holland, (1984)

    \bibitem{kruger}Kr{\"u}ger, H., Simon, B.: Cantor polynomials and some related classes of OPRL. \textit{J. Approx. Theory} \textbf{191}, 71-–93 (2015) 
    
    \bibitem{maka} Makarov, N. G., Volberg, A.: On the harmonic measure of discontinuous fractals, LOMI Preprints,
    E-6-86, Steklov Mathematical Institute, Leningrad Department, (1986)

    
    \bibitem{Mane} Ma\~{n}\'{e}, R., Da Rocha, L.F.: {Julia sets are uniformly perfect}, Proc. Amer. Math. Soc. \textbf{116}(1), 251--257 (1992)
    
    \bibitem{mant1}Mantica, G.: {A Stable Stieltjes Technique to Compute Jacobi Matrices Associated with Singular Measures}, Const. Approx. \textbf{12}, 509--530 (1996)
    
    \bibitem{mant2}Mantica, G.: {Quantum Intermittency in Almost-Periodic Lattice Systems derived from their Spectral Properties}, Physica D \textbf{103}, 576--589 (1997)
    
    \bibitem{mant3}Mantica, G.: {Numerical computation of the isospectral torus of finite gap sets and of IFS Cantor sets}, Preprint (2015), arXiv:1503.03801
    
    \bibitem{mant4}Mantica, G.: {Orthogonal polynomials of equilibrium measures supported on Cantor sets}, J. Comput. Appl. Math. \textbf{290}, 239--258 (2015)
    
	
	\bibitem{yuditskii}Peherstorfer, F., Volberg, A., Yuditskii, P.: {Limit periodic Jacobi matrices with a prescribed $p$-adic hull and a singular continuous spectrum}, Math. Res. Lett. \textbf{13}, 215--230 (2006)
	
	\bibitem{peher} Peherstorfer, F., Yuditskii, P.: {Asymptotic behavior of polynomials orthonormal on a homogeneous set}, J. Anal. Math., \textbf{89}, 113-154 (2003)
	
	\bibitem{Ransford}Ransford, T.: {Potential theory in the complex plane}, Cambridge University Press, (1995)
	
	\bibitem{remling} Remling, C.: {The absolutely continuous spectrum of Jacobi matrices}, Ann. Math., \textbf{174}, 125--171 (2011)

	
	\bibitem{saff}Saff, E.B., Totik, V.: {Logarithmic potentials with external fields}, Springer-Verlag, New York (1997)
	
	\bibitem{simon1}Simon, B.: {Equilibrium measures and capacities in spectral theory}, Inverse Probl. Imaging, \textbf{1}, 713--772 (2007)
	
	\bibitem{Sim3}Simon, B.: {Szeg\H{o}'s Theorem and Its Descendants: Spectral Theory for $L^2$ Perturbations of Orthogonal Polynomials}, Princeton University Press, Princeton, NY (2011)
	
	\bibitem{sodin2}Sodin, M., Yuditskii, P.: The limit-periodic finite-difference operator on $l^2(\mathbb{Z})$ associated with iterations of quadratic polynomials, J Stat. Phys., \textbf{60}, 863--873 (1990)
	
	\bibitem{sodin}Sodin, M., Yuditskii, P.: {Almost periodic Jacobi matrices with homogeneous spectrum, infinite-dimensional Jacobi inversion, and Hardy spaces of character-automorphic functions}, J. Geom. Anal. \textbf{7}, 387--435 (1997)
	
	\bibitem{Stahl}Stahl, H., Totik, V.: {General orthogonal polynomials},
	Encyclopedia of Mathematics, vol. 43, Cambridge University Press, New York (1992)
	
	\bibitem{teschl} Teschl, G.: {Jacobi Operators and Completely Integrable Nonlinear Lattices}, Math. Surv. and Mon. 72, Amer. Math. Soc., Rhode Island (2000)
	
	
	\bibitem{ase}Van Assche, W.: {Asymptotics for orthogonal polynomials}, Lecture Notes in Mathematics, 1265, Springer-Verlag, Berlin (1987)
	
		\bibitem{volberg}Volberg, A., Yuditskii, P.: {Kotani-Last problem and Hardy spaces on surfaces of Widom type}, Invent. Math. \textbf{197}, 683--740 (2014)
	
	
	\bibitem{widom2}Widom, H: {Extremal polynomials associated with a system of curves in the complex plane}, Adv. Math., \textbf{3}, 127--232 (1969)
	
	\bibitem{yuditskii}Yudistkii, P: {On the Direct Cauchy Theorem in Widom Domains: Positive and Negative Examples}, Comput. Methods Funct. Theory, \textbf{11}, 395--414 (2012)
\end{thebibliography}


\end{document}